 \newtheorem{thm}{Theorem}[section]
 \newtheorem{prop}[thm]{Proposition}
\theoremstyle{definition}
 \theoremstyle{remark}
\numberwithin{equation}{section}
\title[Variation and oscillation operators in Schr\"odinger setting]{Variation and oscillation operators on weighted Morrey-Campanato spaces in the Schr\"odinger setting}
\author{V. Almeida, J.J. Betancor, J.C. Fari\~na and L. Rodr\'{\i}guez-Mesa}
\address{V\'{\i}ctor Almeida, Jorge J. Betancor, Juan C. Fari\~na and Lourdes Rodr\'{\i}guez-Mesa\newline
	Departamento de An\'alisis Matem\'atico, Universidad de La Laguna,\newline
	Campus de Anchieta, Avda. Astrof\'isico S\'anchez, s/n,\newline
	38721 La Laguna (Sta. Cruz de Tenerife), Spain}
\email{valmeida@ull.edu.es, jbetanco@ull.es, jcfarina@ull.edu.es,
lrguez@ull.edu.es
}
\thanks{The authors are partially supported by grant PID2019-106093GB-I00 from the Spanish Government}
\subjclass[2020]{43A85, 42B20, 42B23}
\keywords{Variation operator, oscillation operator, Morrey-Campanato spaces, Schr\"odinger operator}
\dedicatory{Dedicated to the memory of our friend and colleague Eleanor Harboure}
\date{\today}
\begin{document}
\maketitle

\begin{abstract}
We denote by $\mathcal{L}$ the Schr\"odinger operator with potential $V$, that is, $\mathcal{L}=-\Delta+V$, where it is assumed that $V$ satisfies a reverse H\"older inequality. We consider weighted Morrey-Campanato spaces ${\rm BMO}_{\mathcal{L},w}^\alpha (\mathbb{R}^d)$ and ${\rm BLO}_{\mathcal{L},w}^\alpha (\mathbb{R}^d)$ in the Schr\"odinger setting. We prove that the variation operator $V_\sigma (\{T_t\}_{t>0})$, $\sigma >2$, and the oscillation operator $O(\{T_t\}_{t>0},\{t_j\}_{j\in\mathbb{Z}})$, where $t_j<t_{j+1}$, $j\in \mathbb{Z}$, $\displaystyle \lim_{j\rightarrow +\infty}t_j=+\infty$ and $\displaystyle\lim_{j\rightarrow -\infty }t_j=0$, being $T_t=t^k\partial _t^ke^{-t\mathcal{L}}$, $t>0$, with $k\in \mathbb{N}$, are bounded operators from ${\rm BMO}_{\mathcal{L},w}^\alpha (\mathbb{R}^d)$ into ${\rm BLO}_{\mathcal{L},w}^\alpha (\mathbb{R}^d)$. We also establish the same property for the maximal operators defined by $\{t^k\partial _t^ke^{-t\mathcal L}\}_{t>0}$, $k\in \mathbb{N}$.
\end{abstract}

\section{Introduction}\label{S1}

Let $\{T_t\}_{t>0}$ be a family of bounded operators in $L^p(\mathbb R^d)$ for some $1 \leq p < \infty$. Many times we are interested in knowing the behavior of $T_t$ when $ t\rightarrow 0^+$. Specifically we want to know if there exists the limit $\displaystyle\lim_{t \rightarrow 0^+}T_t(f)(x)$ for almost everywhere $x \in \mathbb R^d$ when $f\in L^p(\mathbb R^d)$. A first way to deal with the problem is to consider the maximal operator $T_*$ defined by $\displaystyle T_*f=\sup_{t>0} |T_tf|$. If $T_*$ defines a bounded operator from $L^p(\mathbb R^d)$ into $L^{p,\infty}(\mathbb R^d)$ and $\displaystyle\lim_{t\rightarrow 0^+}T_t(g)(x)$ exists for almost all $x\in \mathbb R^d$ when $g \in \mathcal D$ where $\mathcal D$ is a dense subspace of $L^p(\mathbb R^d)$, then $\displaystyle\lim_{t \rightarrow 0^+}T_t(t)(x)$ exists for almost all $x\in \mathbb R^d$ when $f \in L^p(\mathbb R^d)$. This procedure is well known and it is named Banach principle (\cite[pp.27-28]{Duo}). Other approach to study this question is based in the variation operator. Let $\sigma >2$. The variation operator  $V_\sigma(\{T_t\}_{t>0})$ is defined by

$$
V_\sigma(\{T_t\}_{t>0})(f)(x)= \sup_{\substack{0<t_n<t_{n-1}<\dots<t_1\\n\in \mathbb N}}
\left( \sum^{n-1}_{j=1}|T_{t_{j+1}}(f)(x) - T_{t_j}(f)(x)|^\sigma\right)^{\frac{1}{\sigma}}.
$$
If $V_\sigma(\{T_t\}_{t>0})(f)(x) < \infty$, then there exists the limit $\displaystyle\lim_{t \rightarrow 0^+}T_t(f)(x)$.

We observe that in this case it is not necessary to have the existence of the limit when $f$ is in a dense subset of $L^p(\mathbb R^d)$. In order to see the measurability of $V_\sigma(\{T_t\}_{t>0})(f)$ when $f \in L^p(\mathbb R^d)$ we need additional properties for $\{T_t\}_{t>0}$. For instance, if for almost all $x\in \mathbb R^d$ the function $t \longrightarrow T_t(f)(x)$ is continuous in $(0,\infty)$.
Then
$$
V_\sigma(\{T_t\}_{t>0})(f)(x) = \sup_{\substack{0<t_n<t_{n-1}<\ldots<t_1\\ t_j\in \mathbb Q,j=1,\,\ldots,n\\
n\in \mathbb N}}\left( \sum^{n-1}_{j=1}|T_{t_{j+1}}(f)(x) - T_{t_j}(f)(x)|^\sigma\right)^{\frac{1}{\sigma}},\;\;\mbox { a.e. }x\in \mathbb R^d,
$$
and $V_\sigma(\{T_t\}_{t>0}(f)$ is measurable in $\mathbb R^d$. Once the measurability property is assumed it is of interested to study the boundedness of the variation operators in function spaces. Note that if $V_\sigma(\{T_t\}_{t>0})(f)(x)$ defines a bounded operator in $L^p$, BMO, Lipschitz or Hardy spaces, for instance, then $V_\sigma(\{T_t\}_{t>0})(f)(x) < \infty$ for almost all $x\in\mathbb R^d$, when $f$ belongs to those function spaces. Furthermore, the boundedness properties of the variation operator inform us about the speed of convergence of $T_t(f)(x)$ as $t \rightarrow 0^+$.

Variational inequalities have been very studied in the last two decads in probability, ergodic theory and harmonic analysis. Lépingle (\cite{Le}) established the first variational inequality involving martingales improving the classical Doob maximal inequality. Bourgain (\cite{Bo}), some years later, proved a variational inequality for the ergodic average of a dynamic system. Since then many authors have studied variation operators in harmonic analysis (see, for instance, \cite{BORSS}, \cite{CJRW1}, \cite{CJRW2}, \cite{DMT}, \cite{JSW}, \cite{MTXu}, \cite{MT}, \cite{MSZ}, \cite{MTZ} and  \cite{OSTTW}).

In order to  obtain $L^p$-variation inequalities it is usual to need $\sigma >2$ (see \cite[Remark 1.7]{CJRW1} and \cite{Q}). When $\sigma =2$ a good substitute is the oscillation operator defined as follows. Suppose that $\{t_j\}_{j\in \mathbb Z}$ is a sequence of positive numbers such that $0<t_j<t_{j+1}<\infty$, $j \in \mathbb Z$, $\displaystyle \lim_{j \rightarrow -\infty}t_j=0$ and $\displaystyle \lim_{j \rightarrow +\infty}t_j=+\infty$. We define the oscillation operator associated with $\{t_j\}_{j\in \mathbb Z}$ for $\{T_t\}_{t>0} $ by
$$
O(\{T_t\}_{t>0}, \{t_j\}_{j \in \mathbb Z})(f)(x) = \left( \sum_{j\in \mathbb Z}\sup_{t_j \leq \varepsilon_j < \varepsilon_{j+1}< t_{j+1}}|T_{\varepsilon_j}(f)(x) - T_{\varepsilon_{j+1}}(f)(x)|^2\right)^{\frac{1}{2}}.
$$
Note that if the exponent 2 in the last definition is replaced by other greater than 2 the new operator is controlled by that with exponent 2.

Finally we recall the definition of the short variation operator $SV(\{T_t\}_{t>0})$. For every $k \in \mathbb Z$ we define
$$
V_k(\{T_t\}_{t>0})(f)(x) = \sup_{\substack{2^{-k}<t_n<\ldots < t_1\leq 2^{-k+1}\\n\in \mathbb N}}\left( \sum^{n-1}_{j=1}|T_{t_j}(f)(x)-T_{t_{j+1}}(f)(x)|^2 \right)^{\frac{1}{2}}.
$$
The short variation operator $SV(\{T_t\}_{t>0})$ is given by
$$
SV(\{T_t\}_{t>0})(f)(x)=\left( \sum_{k\in \mathbb Z}(V_k(\{T_t\}_{t>0})(f)(x))^2\right)^{\frac{1}{2}}.
$$
Our objective in this paper is to study the variation, oscillation and short variation operators when $T_t=t^k\partial^k_tS_t$, $t>0$, with $k\in \mathbb N$, where $\{S_t\}_{t>0}$ represents the heat or Poisson semigroup associated with the Schr\"odinger operator in $\mathbb R^d$. We consider weighted Morrey-Campanato spaces in the Schr\"odinger setting.

We denote by $\mathcal L$ the Schr\"odinger operator in $\mathbb R^d$, $d \geq 3$, defined by
$$
\mathcal L= - \Delta + V,
$$
where $\displaystyle\Delta = \sum_{i=1}^d \partial^2_{x_i}$ represents the Euclidean Laplacian and the potential $V \geq 0$ is not identically zero and it belongs to $q$-reverse H\"older class (in short, $V\in RH_q(\mathbb R^d)$), that is, there exists $C>0$ such that
$$
\left(\frac{1}{|B|}\int_BV(x)^qdx\right)^{\frac{1}{q}} \leq \frac{C}{B}\int_BV(x)dx,
$$
for every ball $B$ in $\mathbb R^d$. The class $RH_q(\mathbb R^d)$, is defined in this way for $1<q<\infty$. Every nonegative polynomial is in $RH_q(\mathbb R^d)$ for each $1<q<\infty$.

Harmonic analysis associated with the operator $\mathcal L$ has been developed by several authors in the century. Shen's paper \cite{Sh1} can be considered the starting point of the most of these studies (see, for instance, \cite{DYZ}, \cite{DGMTZ}, \cite{DZ}, \cite{Ky}, \cite{MSTZ}, \cite{Sh2} and \cite{WY}). Professor Eleanor Harboure, to whose memory this paper is dedicated, studied several important aspects of the harmonic analysis in the Schr\"odinger setting (\cite{BFHR1}, \cite{BFHR2}, \cite{BCH1}, \cite{BCH2}, \cite{BCH3}, \cite{BHQ1}, \cite{BHQ4}, \cite{BHQ3}, \cite{BHQ2}, \cite{BHQ5}, \cite{BHS1}, \cite{BHS5}, \cite{BHS2}, \cite{BHS3}, \cite{BHS4}, \cite{BBHV} and \cite{HSV}).

The following function $\rho$, that is named critical radius, plays an important role and it is defined by
$$
\rho (x)=\sup\big\{r\in (0,\infty ): \frac{1}{r^{d-2}}\int_{B(x,r)}V(y)dy\leq 1\big\}.
$$

The Schr\"odinger operator $\mathcal L$ becomes a nice perturbation of the Euclidean Laplacian, that means that the harmonic analysis operators (Riesz transforms, multipliers, Littlewood-Paley functions) have the same behaviour close to the diagonal than the corresponding Euclidean operators. The closeness to the diagonal is defined by the critical radius function, The main properties of the function $\rho$ were established in \cite[Lemma 1.4]{Sh1}.

By a weight $w$ we understand a measurable and positive function in $\mathbb R^d$. As in \cite{BHS2} we say that a weight $w$ is in $A^{\rho,\theta}_p(\mathbb R^d)$, with $1<p<\infty$ and $\theta >0$, when there exists $C>0$ such that, for every ball $B$ in $\mathbb R^d$,
$$
\left(\frac{1}{\Psi_\theta(B)|B|}\int_B w(y) dy\right)\left(\frac{1}{\Psi_\theta(B)|B|}\int_B w^{-\frac{1}{p-1}}(y)dy\right)^{p-1} \leq C.
$$
Here if $x \in \mathbb R^d$ and $r>0$
$$
\Psi_\theta(B(x,r))= \left(1+\frac{r}{\rho(x)}\right)^\theta.
$$
We define $A_p^{\rho,\infty}(\mathbb R^d)= \displaystyle\cup_{\theta >0}A_p^{\rho,\theta}(\mathbb R^d)$, $1<p<\infty$.

In \cite{BHS2} and \cite{Ta} the main properties of the weights in $A_p^{\rho ,\infty }(\mathbb R^d)$ were proved.

We now define the Morrey-Campanato spaces ${\rm BMO}_{\mathcal L,w}^\alpha (\mathbb R^d)$ and ${\rm BLO}_{\mathcal L,w}^\alpha (\mathbb R^d)$.

Let $w\in A_p^{\rho ,\infty }(\mathbb R^d)$ and $\alpha \in [0,1)$. A locally integrable function $f$ on $\mathbb R^d$ is said to be in ${\rm BMO}_{\mathcal L,w}^\alpha (\mathbb R^d)$ when there exists $C>0$ such that
\begin{equation}\label{1.1}
\frac{1}{|B(x_0,r_0)|^\alpha w(B(x_0,r_0))}\int_{B(x_0,r_0)}|f(y)-f_{B(x_0,r_0)}|dy\leq C,\quad x_0 \in \mathbb R^d\mbox{ and }0<r_0<\rho (x_0),
\end{equation}
where 
$$
f_{B(x_0,r_0)}=\frac{1}{|B(x_0,r_0)|}\int_{B(x_0,r_0)}f(y)dy,\quad x_0\in \mathbb R^d\mbox{ and }r_0>0,
$$
and
\begin{equation}\label{1.2}
\frac{1}{|B(x_0,r_0)|^\alpha w(B(x_0,r_0))}\int_{B(x_0,r_0)}|f(y)|dy\leq C,\quad x_0 \in \mathbb R^d\mbox{ and }r_0\geq\rho (x_0),
\end{equation}

We define 
$$
\|f\|_{{\rm BMO}_{\mathcal L,w}^\alpha (\mathbb R^d)}=\inf\big\{C>0: \eqref{1.1}\mbox{ and }\eqref{1.2}\mbox{ hold}\big\}.
$$
As it is proved in \cite[Lemma 2.1]{TZ} in \eqref{1.2} it is sufficient to consider $r_0=\rho (x_0)$.

We say that a function $f\in {\rm BMO}_{\mathcal L,w}^\alpha (\mathbb R^d)$ is in ${\rm BLO}_{\mathcal L,w}^\alpha (\mathbb R^d)$ when there exists $C>0$ such that
\begin{equation}\label{1.3}
\frac{1}{|B(x_0,r_0)|^\alpha w(B(x_0,r_0))}\int_{B(x_0,r_0)}(f(y)-{\rm ess}\hspace{-2mm}\inf_{\hspace{-5mm}z\in B(x_0,r_0)}f(z))dy\leq C,\quad x_0 \in \mathbb R^d\mbox{ and }0<r_0<\rho (x_0).
\end{equation}
We define 
$$
\|f\|_{{\rm BLO}_{\mathcal L,w}^\alpha (\mathbb R^d)}=\inf\big\{C>0: \eqref{1.2} \mbox{ and }\eqref{1.3}\mbox{ hold} \big\}.
$$ 
It is clear that ${\rm BLO}_{\mathcal L,w}^\alpha (\mathbb R^d)$ is contained in ${\rm BMO}_{\mathcal L,w}^\alpha (\mathbb R^d)$.

Note that the spaces ${\rm BMO}_{\mathcal L,w}^\alpha (\mathbb R^d)$ and ${\rm BLO}_{\mathcal L,w}^\alpha (\mathbb R^d)$ actually depend on the critical radius function $\rho$ but here we prefer to point out the dependence of the operator $\mathcal L$.

The operator $-\mathcal L$ generates a semigroup of operators $\{W_t^{\mathcal L}:=e^{-t\mathcal L}\}_{t>0}$ on $L^p(\mathbb R^d)$, $1\leq p<\infty$, where, for every $t>0$,
$$
W_t^{\mathcal L}(f)(x)=\int_{\mathbb R^d}W_t^\mathcal L(x,y)f(y)dy,\quad f\in L^p(\mathbb R^d),\;1\leq p<\infty.
$$
$\{W_t^\mathcal L\}_{t>0}$ is also named the heat semigroup associated with $\mathcal L$. For every $t>0$, $W_t^{\mathcal L} (\cdot, \cdot )$ is a positive symmetric function on $\mathbb R^d\times \mathbb R^d$ and satisfies that $\int_{\mathbb R^d}W_t^\mathcal L (x,y)dy\leq 1$. The semigroup $\{W_t^\mathcal L\}_{t>0}$ is not Markovian.

By using subordination formula (\cite[pp. 259-268]{Yo}), for every $\beta \in (0,1)$, the semigroup of operators $\{W_{\beta, t}^\mathcal L\}_{t>0}$ generated by $-\mathcal L^\beta $ is defined by
$$
W_{\beta, t}^\mathcal L(f)=\int_0^\infty \eta_t^\beta (s)W_s^\mathcal L(f)ds,\quad t>0,
$$
where $\eta_t^\beta$ is a certain nonnegative continuous function. The special case $\{W_{1/2,t}^ \mathcal L\}_{t>0}$ is known as Poisson semigroup associated with $\mathcal L$.

In \cite[Theorem 6]{DGMTZ} it was proved that the maximal operators $W_*^\mathcal L$ and $W_{1/2,*}^\mathcal L$ defined by
$$
W_*^\mathcal L(f)=\sup_{t>0}|W_t^\mathcal L (f)|\quad \mbox{ and }\quad W_{1/2,*}^\mathcal L (f)=\sup_{t>0}|W_{1/2,t}^\mathcal L(f)|,
$$
are bounded from ${\rm BMO}_{\mathcal L}(\mathbb R^d)$ into itself, where by ${\rm BMO}_{\mathcal L}(\mathbb R^d)$ we represent the space ${\rm BMO}_{\mathcal L,w}^\alpha (\mathbb R^d)$ when $w=1$ and $\alpha =0$. 
Ma, Stinga, Torrea and Zhang (\cite[Theorem 1.3]{MSTZ}) proved that $W_*^\mathcal L$ and $W_{1/2,*}^\mathcal L$ are bounded from ${\rm BMO}_{\mathcal L}^\alpha (\mathbb R^d)$ into itself, where ${\rm BMO}_{\mathcal L}^\alpha (\mathbb R^d)$ denotes the space ${\rm BMO}_{\mathcal L,w}^\alpha (\mathbb R^d)$ with $w=1$. 
In \cite[Proposition 5.2, (i)]{YYZ2} it was established that $W_*^\mathcal L$ and $W_{1/2,*}^\mathcal L$ are bounded from $E_\rho ^{\alpha ,p}(\mathbb R^d)$ into $\widetilde{E}_\rho ^{\alpha ,p}(\mathbb R^d)$, when $1<p<\infty$, and where these spaces are defined like ${\rm BMO}_{\mathcal L,w}^\alpha (\mathbb R^d)$ and ${\rm BLO}_{\mathcal L,w}^\alpha (\mathbb R^d)$, but where the $L^1$-norm is replaced by the $L^p$-norm and $w=1$.

We now consider, for every $k\in \mathbb N$, the maximal operators 
$$
W_*^{\mathcal L,k}(f)=\sup_{t>0}|t^k \partial _t^kW_t^\mathcal L (f)|.
$$

Our first result is the following.
\begin{thm}\label{Th1.1}
    Let $k\in \mathbb N$, $q>d/2$ and $\alpha \in [0,1)$. Suppose that $V\in RH_q(\mathbb{R}^d)$ and that $w\in A_p^{\rho,\theta}(\mathbb{R}^d)$, for some $\theta>0$ such that $2(d(p+\alpha-1)+p\theta)<min\{1,2-d/q\}$. Then, the maximal operators $W_*^{\mathcal L,k}$ are bounded from ${\rm BMO}_{\mathcal L,w}^\alpha (\mathbb R^d)$ into ${\rm BLO}_{\mathcal L,w}^\alpha (\mathbb R^d)$.
 \end{thm}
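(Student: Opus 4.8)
The plan is to verify the two conditions \eqref{1.2} and \eqref{1.3} for $g:=W_*^{\mathcal L,k}(f)$ when $f\in{\rm BMO}_{\mathcal L,w}^\alpha(\mathbb R^d)$; throughout we abbreviate $\|f\|:=\|f\|_{{\rm BMO}_{\mathcal L,w}^\alpha(\mathbb R^d)}$. Write $S_t:=t^k\partial_t^k W_t^{\mathcal L}$, so that $g=\sup_{t>0}|S_t(f)|$ and $S_t$ is \emph{linear} with kernel $S_t(x,y):=t^k\partial_t^k W_t^{\mathcal L}(x,y)$. The basic input is the pair of estimates, valid for every $N\in\mathbb N$: a size bound
\begin{equation*}
|S_t(x,y)|\le C_N\,t^{-d/2}e^{-c|x-y|^2/t}\Bigl(1+\tfrac{\sqrt t}{\rho(x)}+\tfrac{\sqrt t}{\rho(y)}\Bigr)^{-N},\qquad x,y\in\mathbb R^d,\ t>0,
\end{equation*}
and a H\"older-in-$x$ bound, for $|x-x'|\le\sqrt t$,
\begin{equation*}
|S_t(x,y)-S_t(x',y)|\le C_N\Bigl(\tfrac{|x-x'|}{\sqrt t}\Bigr)^{\delta}t^{-d/2}e^{-c|x-y|^2/t}\Bigl(1+\tfrac{\sqrt t}{\rho(x)}+\tfrac{\sqrt t}{\rho(y)}\Bigr)^{-N},\qquad\delta:=\min\{1,2-\tfrac dq\},
\end{equation*}
which follow from $V\in RH_q(\mathbb R^d)$ and the properties of $\rho$ in \cite[Lemma 1.4]{Sh1}. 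Integrating the size bound in $y$ gives $\sup_{t>0}|S_t(1)(x)|\le C$ and $\sup_{t>0}|S_tf(x)|\le C\,Mf(x)$, with $M$ the Hardy--Littlewood maximal operator, so $W_*^{\mathcal L,k}$ is of weak type $(1,1)$ and bounded on $L^r(\mathbb R^d)$ for $1<r\le\infty$.

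For the global condition \eqref{1.2}, by \cite[Lemma 2.1]{TZ} it suffices to take $r_0=\rho(x_0)$; put $B=B(x_0,\rho(x_0))$ and split $f=f\chi_{2B}+\sum_{j\ge1}f\chi_{R_j}$, $R_j:=2^{j+1}B\setminus2^jB$. For $y\in B$ and $z\in R_j$ one has $|y-z|\sim2^jr_0$ and $\rho(y)\sim\rho(z)\sim r_0$, so the size bound yields $\sup_{t>0}|S_t(y,z)|\le C_{N'}2^{-jN'}r_0^{-d}$, hence $\int_B W_*^{\mathcal L,k}(f\chi_{R_j})\le C2^{-jN'}\int_{2^{j+1}B}|f|$; since each $2^{j+1}B$ is supercritical, \eqref{1.2} together with the doubling-type growth $w(2^{j+1}B)\le C\,2^{j\gamma}w(B)$ of weights in $A_p^{\rho,\theta}(\mathbb R^d)$ (see \cite{BHS2,Ta}) makes $\sum_{j\ge1}$ converge for $N'$ large. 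For the remaining term, $W_*^{\mathcal L,k}(f\chi_{2B})$ restricted to $B$ is handled by covering $2B$ with boundedly many subcritical balls and, on each, writing $f$ as its mean (controlled by $\sup_t|S_t(1)|\le C$) plus an oscillatory part (controlled via H\"older's inequality, the $L^r$-boundedness of $W_*^{\mathcal L,k}$ and a John--Nirenberg-type improvement of ${\rm BMO}_{\mathcal L,w}^\alpha(\mathbb R^d)$ on subcritical balls); \eqref{1.2} closes this case.

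For the local condition \eqref{1.3}, fix $B=B(x_0,r_0)$ with $r_0<\rho(x_0)$ and write $f=f_1+f_2+f_3$, $f_1:=(f-f_{2B})\chi_{2B}$, $f_2:=(f-f_{2B})\chi_{(2B)^c}$, $f_3:=f_{2B}$. Linearity of $S_t$ gives, for a.e.\ $y\in B$ and every $t>0$,
\begin{equation*}
g(y)\le\sup_{s>0}|S_sf_1(y)|+\sup_{s>0}|S_sf_2(y)|+|f_{2B}|\sup_{s>0}|S_s(1)(y)|,\qquad g(y)\ge|S_tf_2(y)|-|S_tf_1(y)|-|f_{2B}|\,|S_t(1)(y)|.
\end{equation*}
Let $c_B:=\sup_{t>0}|S_tf_2(x_0)|$. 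The H\"older-in-$x$ bound on the annuli $2^{j+1}B\setminus2^jB$, with a telescoping estimate of the means $f_{2^jB}$, yields uniformly in $t>0$ and for all $y\in B$
\begin{equation*}
\bigl|\,|S_tf_2(y)|-|S_tf_2(x_0)|\,\bigr|\le C\sum_{j\ge1}2^{-j\delta}\,\frac1{|2^jB|}\int_{2^jB}|f-f_{2B}|\le C\|f\|\,|B|^{\alpha-1}w(B),
\end{equation*}
and similarly $|f_{2B}|\le C\|f\|\,|B|^{\alpha-1}w(B)$ (telescoping from $2B$ up to $B(x_0,\rho(x_0))$, then using \eqref{1.2}). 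Hence, using $\sup_s|S_s(1)|\le C$ and, for $f_1$, the weak type $(1,1)$ with the $L^r$/John--Nirenberg estimate (so that $\int_B\sup_s|S_sf_1|\le C\|f\||B|^\alpha w(B)$), the first inequality yields $g(y)\le c_B+\sup_s|S_sf_1(y)|+C\|f\||B|^{\alpha-1}w(B)$ a.e.\ on $B$. For the matching lower bound on $g$ we split into cases. If $c_B\le C\|f\||B|^{\alpha-1}w(B)$ we just use $g\ge0$. Otherwise the Gaussian decay of $S_t(x_0,\cdot)$ against $f_2$ forces $\sup_{0<t\le r_0^2}|S_tf_2(x_0)|\le C\|f\||B|^{\alpha-1}w(B)$, while $|S_tf_2(x_0)|\to0$ as $t\to\infty$, so any $t_*$ with $|S_{t_*}f_2(x_0)|$ near $c_B$ satisfies $t_*>r_0^2$; then $t_*^{-d/2}\lesssim r_0^{-d}$ gives $|S_{t_*}f_1(y)|\le Cr_0^{-d}\int_{2B}|f-f_{2B}|$ and $|S_{t_*}f_3(y)|\le C|f_{2B}|$, both $\le C\|f\||B|^{\alpha-1}w(B)$ for a.e.\ $y\in B$, and combining with the (uniform) H\"older bound gives $g(y)\ge|S_{t_*}f(y)|\ge c_B-C\|f\||B|^{\alpha-1}w(B)$ a.e.\ on $B$. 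In both cases ${\rm ess\,inf}_B g\ge c_B-C\|f\||B|^{\alpha-1}w(B)$ (with $c_B$ replaced by $0$ in the first), so subtracting from the upper bound for $g$ and integrating over $B$ gives $\frac1{|B|^\alpha w(B)}\int_B\bigl(g(y)-{\rm ess\,inf}_B g\bigr)\,dy\le C\|f\|$, which is \eqref{1.3}. Together with \eqref{1.2} this shows $g\in{\rm BLO}_{\mathcal L,w}^\alpha(\mathbb R^d)$ with norm $\le C\|f\|$.

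The two places I expect to be delicate are: (i) proving the size and H\"older kernel estimates for $S_t=t^k\partial_t^k W_t^{\mathcal L}$, uniformly in $t>0$ and for every $k\in\mathbb N$, with the sharp exponent $\delta=\min\{1,2-d/q\}$ --- here $V\in RH_q(\mathbb R^d)$ and Shen's analysis of $\rho$ enter; and (ii) the convergence of the dyadic series above, in which $\frac1{|2^jB|}\int_{2^jB}|f-f_{2B}|$ is dominated by $\|f\|$ times a product of powers of $2^j$ coming from $|2^jB|^{\alpha-1}$, from $w(2^jB)/w(B)$ (whose exponent involves $dp$ and $p\theta$ through $A_p^{\rho,\theta}$) and from the telescoping of the means. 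Beating the fixed decay $2^{-j\delta}$ against this growth --- in the H\"older sum, in the sum bounding $f_{2B}$, and in the near/far sums of the global estimate --- is exactly what the hypothesis $2(d(p+\alpha-1)+p\theta)<\min\{1,2-d/q\}$ ensures, and I expect this bookkeeping, together with the uniform-in-$t$ kernel bounds, to be the main obstacle.
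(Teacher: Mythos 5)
There is a genuine gap, and it sits exactly where the theorem is hard: your treatment of the constant piece $f_3=f_{2B}$. You control its contribution by the crude bound $\sup_{t>0}|S_t(1)(y)|\le C$ together with the claim $|f_{2B}|\le C\|f\|\,|B|^{\alpha-1}w(B)$. That claim is false for small subcritical balls: telescoping from $B(x_0,r_0)$ up to the critical scale and then using \eqref{1.2} does not give a bounded constant but the factor $\big(\rho(x_0)/r_0\big)^{d(p+\alpha-1)+p\theta}$ (this is precisely estimate \eqref{promedioB} in the paper; already for $w\equiv1$, $\alpha=0$, a logarithm of $\rho(x_0)/|x-x_0|$ lies in ${\rm BMO}_{\mathcal L}$ and has unbounded means on shrinking balls). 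Since in your scheme the $f_3$-terms in the upper bound for $g$ and in the lower bound at $t_*$ do not cancel (they are evaluated at different points and times and enter with the same sign after subtraction), you are left with an uncontrolled term of order $|B|\,|f_{2B}|$, which exceeds $|B|^{\alpha}w(B)\|f\|$ by that blow-up factor. The paper's proof removes this obstruction by exploiting cancellation, not size: for $k\ge1$ one writes $S_t(1)(x)=\int t^k\partial_t^k\big[W_t^{\mathcal L}(x,y)-W_t(x-y)\big]dy$ (using $\partial_t^kW_t(1)=0$) and invokes Proposition \ref{Prop2.1}, (c) and (d), which give smallness $\big(\sqrt t/\rho(x)\big)^{\delta_0}$ resp. $\big(\sqrt t/\rho(x)\big)^{2-d/q}$, plus a H\"older estimate for $W_t^{\mathcal L}(1)$ from \cite{YYZ2}; these produce compensating factors $(r_0/\rho(x_0))^{2-d/q}$ and $(r_0/\rho(x_0))^{\delta/2}$ that beat the growth of $|f_{B}|$. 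The half-power $\delta/2$ in the intermediate-time term is exactly why the hypothesis carries the factor $2$ in $2(d(p+\alpha-1)+p\theta)<\min\{1,2-d/q\}$; tellingly, your argument never uses that factor, only $d(p+\alpha-1)+p\theta<\delta$, which is a sign the hard term has been bypassed rather than handled.

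Two smaller points. First, your estimates for the near parts ($f\chi_{2B}$ in the supercritical condition and $\sup_s|S_sf_1|$ locally) lean on an unweighted $L^r$ John--Nirenberg-type self-improvement of ${\rm BMO}_{\mathcal L,w}^\alpha(\mathbb R^d)$ that you do not justify; the paper instead first proves the weighted $L^p(\mathbb R^d,w)$-boundedness of $W_*^{\mathcal L,k}$ (following \cite{BHS2}) and pairs the dual statement on $L^{p'}(w^{-1/(p-1)})$ with the dual-weight inequality of Proposition \ref{Prop2.4}; some substitute of this kind is needed. Second, the cancellation route above uses $\partial_t^kW_t(1)=0$, hence $k\ge1$; the case $k=0$ needs a separate argument (the paper refers to \cite[Theorem 3.1]{YYZ2}), and your proposal does not distinguish it.
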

The variation operator $V_\sigma (\{W_t^{\mathcal L}\}_{t>0})$ was studied in \cite{BFHR1} and \cite{BFHR2}. In \cite[Theorem 2.6]{BFHR2} it was proved that $V_\sigma (\{W_t^\mathcal L\}_{t>0}$ is bounded from ${\rm BMO}_\mathcal L (\mathbb R^d)$ into itself. This result was extended by Bui (\cite{Bui}) when the Schr\"odinger operator $\mathcal L$ is replaced by other operator $L$ such that the kernel of $e^{-tL}$, $t>0$, satisfies the same properties than the kernel of $e^{-t\mathcal L}$ (see \cite[p. 125]{Bui}). Tang and Zhang (\cite{TZ}) generalized \cite[Theorem 2.6]{BFHR2} proving that $V_\sigma (\{W_t^\mathcal{L}\}_{t>0})$ is bounded from ${\rm BMO}_{\mathcal L,w}^\alpha (\mathbb R^d)$ into itself (see \cite[Theorem 5]{TZ}).
We extend this last property as follows. The theorem is a complement of the results given in \cite{ZT}. 
\begin{thm}\label{Th1.2}
    Let $k\in \mathbb N$, $q>d/2$, $\alpha \in [0,1)$, $\sigma >2$, and $1<p<\infty$. Suppose that $V\in RH_q(\mathbb{R}^d)$ and that $w\in A_p^{\rho,\theta}(\mathbb{R}^d)$, for some $\theta>0$, and $\{t_j\}_{j\in \mathbb Z}$ is a sequence of positive numbers satisfying that $t_j<t_{j+1}$, $j\in \mathbb Z$, $\lim_{j\rightarrow +\infty}t_j=+\infty$, $\lim_{j\rightarrow -\infty}t_j=0$. If $2(d(p+\alpha-1)+p\theta)<min\{1,2-d/q\}$, then the operators $V_\sigma (\{t^k\partial _t^kW_t^\mathcal L\}_{t>0})$, $O(\{t^k\partial _t^kW_t^\mathcal L\}_{t>0},\{t_j\}_{j\in \mathbb Z})$ and $SV(\{t^k\partial _t^kW_t^\mathcal L\}_{t>0})$ are bounded from ${\rm BMO}_{\mathcal L,w}^\alpha (\mathbb R^d)$ into ${\rm BLO}_{\mathcal L,w}^\alpha (\mathbb R^d)$.
\end{thm}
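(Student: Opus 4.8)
The plan is to prove the stated estimate first for the oscillation operator $O(\{t^k\partial_t^kW_t^{\mathcal L}\}_{t>0},\{t_j\}_{j\in\mathbb Z})$, for an arbitrary sequence $\{t_j\}_{j\in\mathbb Z}$ as in the statement, and for the short variation operator $SV(\{t^k\partial_t^kW_t^{\mathcal L}\}_{t>0})$; the boundedness of $V_\sigma(\{t^k\partial_t^kW_t^{\mathcal L}\}_{t>0})$ then follows from the pointwise inequality $V_\sigma(\{T_t\}_{t>0})(f)\le C\big(SV(\{T_t\}_{t>0})(f)+O(\{T_t\}_{t>0},\{2^j\}_{j\in\mathbb Z})(f)\big)$, obtained by splitting an arbitrary decreasing sequence according to the dyadic blocks $(2^m,2^{m+1}]$, bounding the within-block contributions by the $V_k$'s and the resulting block-to-block jump terms by telescoping, and using $\sum_i a_i^\sigma\le(\sum_i a_i^2)^{\sigma/2}$ for $\sigma>2$. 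Writing $T_t=t^k\partial_t^kW_t^{\mathcal L}$ and $K_t(x,y)$ for its kernel, everything rests on the following facts about $K_t$, all obtainable from \cite{Sh1}, \cite{DGMTZ} and the identity $t^k\partial_t^kW_t^{\mathcal L}=(-t\mathcal L)^ke^{-t\mathcal L}$: for every $N>0$, $|K_t(x,y)|+\sqrt t\,|\nabla_xK_t(x,y)|+t|\partial_tK_t(x,y)|\le C_N t^{-d/2}e^{-c|x-y|^2/t}(1+\sqrt t/\rho(x))^{-N}$, with a Hölder modulus of continuity in $x$ of order $|x-x'|^{\mu}t^{-\mu/2}$ for some $\mu>0$; the comparison with the classical heat kernel $W_t(x-y)=(4\pi t)^{-d/2}e^{-|x-y|^2/(4t)}$, namely $|K_t(x,y)-t^k\partial_t^kW_t(x-y)|\le C(\sqrt t/\rho(x))^{\delta}t^{-d/2}e^{-c|x-y|^2/t}$ for some $\delta>0$ that can be taken arbitrarily close to $\min\{1,2-d/q\}$; and the fact that $\int_{\mathbb R^d}t^k\partial_t^kW_t(x-y)\,dy$ equals $1$ if $k=0$ and $0$ if $k\ge1$.

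Two auxiliary inputs are needed. First, the $L^2(\mathbb R^d)$-boundedness (weighted $L^p(w\,dx)$-boundedness, $w\in A_p^{\rho,\infty}(\mathbb R^d)$, would equally do) of $O(\{T_t\}_{t>0},\{t_j\}_{j\in\mathbb Z})$ and $SV(\{T_t\}_{t>0})$; for $k=0$ this is in \cite{TZ}, and for general $k$ it follows from the variational estimates for bounded analytic semigroups, or directly from the kernel bounds above via a vector-valued Calder\'on--Zygmund argument, and I would record it as a lemma. Second, a John--Nirenberg-type inequality for ${\rm BMO}_{\mathcal L,w}^\alpha(\mathbb R^d)$ (as in \cite{TZ}), allowing one to pass from $L^1$ to $L^2$ averages of $f-f_B$ on balls $B=B(x_0,r_0)$ with $r_0<\rho(x_0)$.

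Now fix such a ball $B$ and abbreviate by $\mathcal V$ any of the sublinear, nonnegative operators $O$ or $SV$; I would verify the ${\rm BLO}$ condition \eqref{1.3} for $\mathcal V(f)$. Decompose $f=f_1+f_2+f_{2B}$ with $f_1=(f-f_{2B})\chi_{2B}$ and $f_2=(f-f_{2B})\chi_{\mathbb R^d\setminus2B}$. From sublinearity one has $\mathcal V(f)(y)\le\mathcal V(f_1)(y)+\mathcal V(f_2+f_{2B})(y)$ and $\mathcal V(f)(z)\ge\mathcal V(f_2+f_{2B})(z)-\mathcal V(f_1)(z)$, whence for a.e.\ $y,z\in B$
\[\mathcal V(f)(y)-\mathcal V(f)(z)\le\mathcal V(f_1)(y)+\mathcal V(f_1)(z)+\mathcal V(f_2+f_{2B})(y)-\mathcal V(f_2+f_{2B})(z).\]
The key point is that $y\mapsto\mathcal V(f_2+f_{2B})(y)$ has oscillation at most $C\|f\|_{{\rm BMO}_{\mathcal L,w}^\alpha(\mathbb R^d)}$ on $B$: by the triangle inequality in the relevant $\ell^2$-norm, $\mathcal V(f_2+f_{2B})(y)-\mathcal V(f_2+f_{2B})(z)$ is controlled by the $\mathcal V$-type norm of the function $t\mapsto T_t(f_2+f_{2B})(y)-T_t(f_2+f_{2B})(z)=\int_{\mathbb R^d\setminus2B}(K_t(y,u)-K_t(z,u))(f-f_{2B})(u)\,du+f_{2B}\big(\int_{\mathbb R^d}K_t(y,u)\,du-\int_{\mathbb R^d}K_t(z,u)\,du\big)$; the first integral is handled by the H\"older-in-$x$ estimate for $K_t$, $|y-z|\le2r_0<2\rho(x_0)$ and \eqref{1.1} (together with the $(1+\sqrt t/\rho)^{-N}$-decay and \eqref{1.2} on the shells of size $\ge\rho(x_0)$), while the second is exactly where the comparison estimate and $\int t^k\partial_t^kW_t(x-y)\,dy\in\{0,1\}$ enter: $\int_{\mathbb R^d}K_t(y,u)\,du$ differs from a constant by a quantity that is $O((\sqrt t/\rho(y))^{\delta}\wedge1)$ and whose variation in $t\in(0,\infty)$ is finite. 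Taking the essential infimum over $z\in B$ in the displayed inequality, with $m_B$ the essential infimum of $\mathcal V(f)$ on $B$, one gets $\mathcal V(f)(y)-m_B\le\mathcal V(f_1)(y)+\frac1{|B|}\int_B\mathcal V(f_1)+C\|f\|_{{\rm BMO}_{\mathcal L,w}^\alpha(\mathbb R^d)}$, and integrating over $B$ reduces everything to the bound $\int_B\mathcal V(f_1)\le|B|^{1/2}\|\mathcal V(f_1)\|_{L^2(\mathbb R^d)}\lesssim|B|^{1/2}\|f_1\|_{L^2(\mathbb R^d)}\lesssim|B|^\alpha w(B)\|f\|_{{\rm BMO}_{\mathcal L,w}^\alpha(\mathbb R^d)}$, which comes from the $L^2$-boundedness of $\mathcal V$, the John--Nirenberg inequality and the local doubling of $w$; here the hypothesis $2(d(p+\alpha-1)+p\theta)<\min\{1,2-d/q\}$ is what reconciles $|B|^\alpha$, $w(B)$ and the $\Psi_\theta$-factors. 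Finally, condition \eqref{1.2} for balls with $r_0\ge\rho(x_0)$ — for which, by \cite[Lemma 2.1]{TZ}, it suffices to take $r_0=\rho(x_0)$ — follows by the same decomposition: the part $f\chi_{2B}$ via the $L^2$-boundedness of $\mathcal V$ and \eqref{1.2} for $2B$, and the part $f\chi_{\mathbb R^d\setminus2B}$ via the pointwise bound $\mathcal V(f\chi_{\mathbb R^d\setminus2B})(y)\le\int_0^\infty|\partial_tT_t(f\chi_{\mathbb R^d\setminus2B})(y)|\,dt$, the estimate for $\partial_tK_t$, and $|y-u|\gtrsim\rho(x_0)$ for $y\in B$, $u\notin2B$.

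The main obstacle will be the kernel analysis for $t^k\partial_t^kW_t^{\mathcal L}$: proving, uniformly in $t>0$, the size, gradient, H\"older and time-regularity bounds, and above all the comparison with $t^k\partial_t^kW_t$ carrying a gain $(\sqrt t/\rho)^{\delta}$ with $\delta$ close to $\min\{1,2-d/q\}$, and then arranging the $\ell^2$- and $\ell^\sigma$-summations so that these gains survive and the numerology matches the constraint on $\theta$. The non-Markovian character of $\{W_t^{\mathcal L}\}_{t>0}$ — so that, for $k=0$, $T_t$ does not send constants to constants — is precisely what makes the comparison estimate indispensable in the step bounding the oscillation of $\mathcal V(f_2+f_{2B})$ on $B$, and is the reason the target space is ${\rm BLO}_{\mathcal L,w}^\alpha(\mathbb R^d)$ rather than only ${\rm BMO}_{\mathcal L,w}^\alpha(\mathbb R^d)$.
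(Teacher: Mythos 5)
Your reduction of $V_\sigma$ to $SV$ plus the dyadic oscillation operator is false, so the variation part of the theorem is not proved. The claimed pointwise inequality $V_\sigma(\{T_t\})(f)\le C\big(SV(f)+O(f,\{2^j\})\big)$ fails already for scalar functions of $t$: take $F(t)=\varepsilon\log_2 t$ for $1\le t\le 2^N$ and constant outside. Within each dyadic block $F$ is monotone with total increment $\varepsilon$, so every block contributes at most $\varepsilon$ both to the oscillation and to the block $2$-variation, giving $SV(F)+O(F,\{2^j\})\le C\varepsilon\sqrt N$, while the two points $1,2^N$ alone give $V_\sigma(F)\ge\varepsilon N$; no constant survives as $N\to\infty$. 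The flaw in your derivation is the last step: after telescoping, a block-to-block jump is a \emph{sum} of many consecutive dyadic increments, and $\sum_i\big(\sum_{m\in A_i}|c_m|\big)^\sigma$ is not controlled by $\big(\sum_m|c_m|^2\big)^{\sigma/2}$ even when the sets $A_i$ are disjoint. The correct decomposition is $V_\sigma\lesssim SV+{}$(long dyadic variation), and the long variation is genuinely not dominated by the oscillation operator — this is exactly why $\sigma>2$ is needed and why the paper treats $V_\sigma$ directly in Section \ref{S3} rather than deducing it from $O$ and $SV$.

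There is a second gap at the heart of the ${\rm BLO}$ estimate. You split only the \emph{function}, $f=f_1+(f_2+f_{2B})$, and then claim that taking the essential infimum over $z$ yields $\mathcal V(f)(y)-m_B\le\mathcal V(f_1)(y)+\frac1{|B|}\int_B\mathcal V(f_1)+C\,|B|^{\alpha-1}w(B)\|f\|_{{\rm BMO}_{\mathcal L,w}^\alpha(\mathbb R^d)}$. This does not follow: sublinearity only gives $\mathcal V(f)(z)\ge\mathcal V(f_2+f_{2B})(z)-\mathcal V(f_1)(z)$, and at a point $z$ nearly realizing $m_B={\rm ess\,inf}_B\,\mathcal V(f)$ you have no pointwise control of the error $\mathcal V(f_1)(z)$ — it is only controlled in ($L^2$ or weighted $L^{p'}$) average, and the near-minimizing set may be arbitrarily small, so the average cannot be substituted for the value. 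The paper avoids this by splitting in \emph{time} at $t=8r_0^2$: the truncation $\{t\ge 8r_0^2\}$ is a pointwise minorant of the full variation/oscillation with no error term (restricting the set of times can only decrease it), hence its essential infimum sits below $m_B$ for free, and one only has to bound its oscillation over $B$ (via Proposition \ref{Prop2.1} (b), (c), (d), the estimate \eqref{promedioB} for $|f_{B(x_0,r_0)}|$, and the square-root trick that produces the factor $2$ in the hypothesis $2(d(p+\alpha-1)+p\theta)<\delta_0$) together with the $L^1(B)$-norm of the small-time part, the latter via the decomposition $f=f_1+f_2+f_3$. Two further points you should not gloss over: unweighted $L^2$-boundedness of $\mathcal V$ does not suffice in the weighted setting (Proposition \ref{Prop2.4} controls $f-f_B$ against the measure $w^{1-\nu}dx$, so one needs boundedness on $L^{p'}(w^{-1/(p-1)})$, which is what Proposition \ref{Prop2.3}\,(a) is for), and the $L^p$-boundedness of the oscillation operator for this non-Markovian semigroup is not an off-the-shelf consequence of analytic-semigroup variational estimates — the paper has to prove it by the local/global comparison with the classical heat semigroup in Section \ref{S4}.
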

In the proof of Theorems \ref{Th1.1} and \ref{Th1.2} we are inspired by the  ideas developed by Da. Yang, Do. Yang and Zhou (\cite{YYZ1}, \cite{YYZ3} and \cite{YYZ2}) and Tang and Zhang (\cite{TZ}).

We organize the paper as follows. In Section \ref{S2} we recall some properties about the kernels, the weights and the spaces that will be useful in the proofs of our results. The proof of Theorem \ref{Th1.2} for the variation operator is given in Section \ref{S3}. We prove Theorem \ref{Th1.2} for the oscillation operator in Section \ref{S4}. In Section \ref{S5} we give a proof of Theorem \ref{Th1.2} for the short variation operator. A sketch of the proof of Theorem \ref{Th1.1} is presented in Section \ref{S6}.

Our arguments allow us also to prove the same properties when the semigroup $\{W_t^\mathcal{L}\}_{t>0}$ is replaced by $\{W_{\beta, t}^{\mathcal L}\}_{t>0}$, with $\beta\in (0,1)$.
We also remark that the methods we have used can be applied to establish versions of Theorems \ref{Th1.1} and \ref{Th1.2} when the operator $\mathcal L$ is replaced by the following ones:

(a) Generalized Schr\"odinger operators defined by $\mathfrak L=-\Delta +\mu$ on $\mathbb R^d$, where $\mu$ is a nonnegative Radon measure on $\mathbb R^d$ satisfying certain scale-invariant Kato condition (\cite{Sh2} and \cite{WY}).

(b) Degenerate Schr\"odinger operators on $\mathbb R^d$ defined as follows. Let $w$ belongs to the Muckenhoupt class $A_2(\mathbb R^d)$ and let $\{a_{ij}\}_{i,j=1}^d$ be a real symmetric matrix function satisfying that
$$
\frac{1}{C}|\xi |^2\leq \sum_{i,j=1}^da_{ij}(x)\xi _i\xi_j\leq C|\xi |^2,\quad x,\xi \in \mathbb R^d.
$$
The degenerate Schr\"odinger operator is defined by
$$
\mathfrak L(f)(x)=-\frac{1}{w(x)}\sum_{i,j=1}^d\partial _i(a_{ij}(\cdot )\partial _jf)(x)+V(x).
$$
Here $V$ satisfies certain integrability conditions with respect to the measure $w(x)dx$ (\cite{HSV}). 

(c) Schr\"odinger operators on $(2n+1)$-dimensional Heisenberg group $\mathbb H_n$ defined by $\mathfrak L=-\Delta_{\mathbb H^n}+V$, where $\Delta _{\mathbb H^n}$ represents the sublaplacian in $\mathbb H^n$ (\cite{LL}).

(d) Schr\"odinger operators on connected and simply connected nilpotent Lie groups $G$ defined by $\mathfrak L=-\Delta _G+V$, where $\Delta _G$ denotes the sublaplacian in $G$ (\cite{VSC}).

Throughout this paper by $c$ and $C$ we always denote positive constants that can change in each occurrence.

\section{Some auxiliary results}\label{S2}
In this section we present some results that will be useful in the sequel. We begin with some properites of the Schr\"odinger heat kernel. 
\begin{prop}\label{Prop2.1} Let $k\in \mathbb{N}$ and $q>d/2$.

(a) For every $N\in \mathbb{N}$ there exists $C=C(N)$ such that
$$
|t^k\partial_t^kW_t^\mathcal{L}(x,y)|\leq C\frac{e^{-c\frac{|x-y|^2}{t}}}{t^{d/2}}\Big(1+\frac{\sqrt{t}}{\rho (x)}+\frac{\sqrt{t}}{\rho (y)}\Big)^{-N},\quad x,y\in \mathbb R^d\mbox{ and }t>0.
$$

(b) For every $0<\delta <\min \{1,2-d/q\}$ and $N\in \mathbb{N}$ there exists $C=C(N,\delta)$ such that, for every $x,y,h\in \mathbb R^d$, $t>0$ and $|h|\leq\sqrt{t}$,
$$
|t^k\partial_t^kW_t^\mathcal{L}(x+h,y)-t^k\partial_t^kW_t^\mathcal{L}(x,y)|\leq C\frac{e^{-c\frac{|x-y|^2}{t}}}{t^{d/2}}\Big(\frac{|h|}{\sqrt{t}}\Big)^\delta \Big(1+\frac{\sqrt{t}}{\rho (x)}+\frac{\sqrt{t}}{\rho (y)}\Big)^{-N}.
$$

(c) For every $0<\delta \leq\min \{1,2-d/q\}$ and $N\in \mathbb{N}$ there exists $C=C(N,\delta)$ such that
$$
\Big|\int_{\mathbb R^d}t^k\partial_t^kW_t^\mathcal{L}(x,y)dy\Big|\leq C\Big(\frac{\sqrt{t}}{\rho (x)}\Big)^\delta\Big(1+\frac{\sqrt{t}}{\rho (x)}\Big)^{-N},\quad x\in \mathbb R^d\mbox{ and }t>0.
$$

(d) There exists $C>0$ such that
$$
|t^k\partial_t^kW_t^\mathcal{L}(x,y)-t^k\partial_t^kW_t(x-y)|\leq C\frac{e^{-c\frac{|x-y|^2}{t}}}{t^{d/2}}\Big(\frac{\sqrt{t}}{\max\{\rho (x),\rho (y)\}}\Big)^{2-\frac{d}{q}},\quad x,y \in \mathbb R^d\mbox{ and }t>0.
$$
Here, $W_t$ represents the classical heat kernel.
\end{prop}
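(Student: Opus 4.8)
The plan is to reduce parts (a) and (b) to their well-known $k=0$ counterparts by exploiting the analyticity of the heat semigroup, and to treat parts (c) and (d) through the perturbation (Duhamel) formula, in each case recovering the time derivatives $t^k\partial_t^k$ by the Cauchy integral formula.

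First I would assemble the ingredients available in the literature (\cite{DGMTZ}, \cite{DZ}, \cite{Ky}, \cite{Sh1}): \emph{(i)} the Gaussian bound with critical--radius decay, $0\le W_t^{\mathcal L}(x,y)\le C_N t^{-d/2}e^{-c|x-y|^2/t}(1+\sqrt t/\rho(x)+\sqrt t/\rho(y))^{-N}$ for every $N\in\mathbb N$, together with its extension to complex times $z$ with $|z-t|\le t/2$ (where $\mathrm{Re}\,z$ and $|z|$ are comparable to $t$, so the bound is unchanged up to constants); \emph{(ii)} the Hölder estimate $|W_t^{\mathcal L}(x+h,y)-W_t^{\mathcal L}(x,y)|\le C_N t^{-d/2}(|h|/\sqrt t)^\delta e^{-c|x-y|^2/t}(1+\sqrt t/\rho(x)+\sqrt t/\rho(y))^{-N}$ for $|h|\le\sqrt t$ and $0<\delta<\min\{1,2-d/q\}$, again in a complex--time version; \emph{(iii)} the Duhamel identity $W_t(x-y)-W_t^{\mathcal L}(x,y)=\int_0^t\!\int_{\mathbb R^d}W_{t-s}(x-z)V(z)W_s^{\mathcal L}(z,y)\,dz\,ds$; and \emph{(iv)} the standard potential estimate $\int_{B(x,r)}\frac{V(z)}{|x-z|^{d-2}}\,dz\le C(r/\rho(x))^{2-d/q}$ for $0<r\le\rho(x)$, together with the basic properties of $\rho$ from \cite[Lemma 1.4]{Sh1}.

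For (a) and (b): since, for fixed $(x,y)$, the map $z\mapsto W_z^{\mathcal L}(x,y)$ is holomorphic on a sector containing $(0,\infty)$, Cauchy's formula gives $\partial_t^k W_t^{\mathcal L}(x,y)=\frac{k!}{2\pi i}\oint_{|z-t|=t/2}\frac{W_z^{\mathcal L}(x,y)}{(z-t)^{k+1}}\,dz$, whence $|t^k\partial_t^k W_t^{\mathcal L}(x,y)|\le k!\,2^{k}\sup_{|z-t|=t/2}|W_z^{\mathcal L}(x,y)|$; applying the same identity to the $x$--difference $W_z^{\mathcal L}(x+h,y)-W_z^{\mathcal L}(x,y)$ and inserting the complex--time bounds of \emph{(i)}, resp.\ \emph{(ii)}, yields exactly (a) and (b). For (d): one differentiates the Duhamel identity $k$ times in $t$ and applies the same Cauchy device to the holomorphic function $z\mapsto W_z(x-y)-W_z^{\mathcal L}(x,y)$, so that it suffices to prove the $k=0$ estimate; this follows by plugging the Gaussian bounds for $W_{t-s}$ and $W_s^{\mathcal L}$ into the right--hand side of Duhamel, carrying out the $s$--integral, and dominating the resulting $\int_{\mathbb R^d}\frac{V(z)}{|x-z|^{d-2}}e^{-c|x-z|^2/t}\,dz$ (and its counterpart centred at $y$) by a dyadic decomposition into annuli $|x-z|\sim 2^j\sqrt t$ controlled through \emph{(iv)}, the symmetry of the kernels producing the $\max\{\rho(x),\rho(y)\}$.

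Finally, for (c) I would use that $\int_{\mathbb R^d}\partial_t^k W_t(x-y)\,dy=\partial_t^k(1)=0$ for $k\ge1$, so that $\int_{\mathbb R^d}t^k\partial_t^k W_t^{\mathcal L}(x,y)\,dy=\int_{\mathbb R^d}t^k\partial_t^k\big(W_t^{\mathcal L}(x,y)-W_t(x-y)\big)\,dy$, and split into two regimes. When $\sqrt t\le\rho(x)$ I insert the bound from (d) (with its $k$--dependent version obtained as above) and integrate in $y$, using $\max\{\rho(x),\rho(y)\}\ge\rho(x)$ and $\int_{\mathbb R^d}t^{-d/2}e^{-c|x-y|^2/t}dy\le C$ to get $\le C(\sqrt t/\rho(x))^{2-d/q}\le C(\sqrt t/\rho(x))^\delta$ for $\delta\le\min\{1,2-d/q\}$; when $\sqrt t\ge\rho(x)$ I instead integrate the bound from (a) with $N$ taken large, which already gives $\le C_N(1+\sqrt t/\rho(x))^{-N}$, the desired inequality in that range. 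The \emph{main obstacle} is the passage to complex times: one must verify that the Gaussian and Hölder bounds, and the Duhamel comparison, extend to $|z-t|\le t/2$ with constants independent of $t$, which is where the analyticity of $\{W_t^{\mathcal L}\}_{t>0}$ — equivalently the self--adjointness and nonnegativity of $\mathcal L=-\Delta+V$ with $V\ge0$, yielding $L^2$ off--diagonal (Davies--Gaffney) estimates — is genuinely used.
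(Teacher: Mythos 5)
Your proposal is essentially correct, but note that the paper does not prove Proposition 2.1 at all: parts (a)--(c) are quoted from \cite[Proposition 3.3]{HLL} and part (d) from \cite[Proposition 1]{WLZ}. What you sketch is, in substance, the standard argument behind those references: reduce $t^k\partial_t^k$ to $k=0$ by Cauchy's formula on $|z-t|=t/2$ using sectorial bounds for the analytic semigroup, and get the comparison with the free kernel from the Duhamel formula together with Shen's potential estimate $\int_{B(x,r)}V(z)|x-z|^{2-d}dz\leq C(r/\rho(x))^{2-d/q}$ and \cite[Lemma 1.4]{Sh1}. So your route buys a self-contained derivation from the $k=0$ kernel estimates of \cite{DGMTZ}, at the price of the complex-time verifications you rightly flag as the main obstacle; these are not a genuine gap, since the complex-time analogues of your ingredients (i)--(ii) follow from the real-time ones by the semigroup sandwich $W_z^{\mathcal L}=W_{t/4}^{\mathcal L}\circ W_{z-t/2}^{\mathcal L}\circ W_{t/4}^{\mathcal L}$ (the outer real-time factors carry the $\rho$-decay and the H\"older smoothness, the middle one only needs a sectorial Gaussian bound, available by Phragm\'en--Lindel\"of from $0\le W_t^{\mathcal L}(x,y)\le W_t(x-y)$), and likewise the Duhamel identity extends to complex time along the segment $[0,z]$, giving the $k\geq1$ version of (d). Two small points of care: in (b), after shifting $x$ by $h$ with $|h|\le\sqrt t$ you must pass from decay in $\rho(x+h)$ to decay in $\rho(x)$, which is done with the comparability estimate of \cite[Lemma 1.4]{Sh1} at the cost of enlarging $N$ (harmless since $N$ is arbitrary); and your proof of (c) uses $\int_{\mathbb R^d}\partial_t^kW_t(x-y)dy=0$, hence requires $k\geq1$ --- consistently, (c) is false for $k=0$ (the integral of $W_t^{\mathcal L}(x,\cdot)$ tends to $1$ as $t\to0^+$), and the paper only invokes (c) with at least one time derivative, so this restriction is the correct reading of the statement.
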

\begin{proof}
The properties $(a)$, $(b)$ and $(c)$ were proved in \cite[Proposition 3.3]{HLL}. The property $(d)$ was established in \cite[Proposition 1]{WLZ}.
\end{proof}

In the sequel we denote $\delta_0:=\min\{1,2-d/q\}$.

We now list the main properties of the weights in $A_p^{\rho ,\theta}(\mathbb R^d)$.
\begin{prop}(\cite[Lemma 2.2]{Ta}, \cite[Proposition 2.4]{TZ})\label{Prop2.3}
Let $1<p<\infty$ and $\theta >0$. 

(a) $w\in A_p^{\rho,\theta}(\mathbb R^d)$ if, and only if, $w^{-\frac{1}{p-1}}\in A_{p'}^{\rho ,\theta}(\mathbb R^d)$, where $p'=\frac{p}{p-1}$.

(b) If $w\in A_p^{\rho,\theta}(\mathbb R^d)$, there exists $C>0$ such that
$$
\frac{w(B)}{w(E)}\leq C\Big(\frac{\psi _\theta (B)|B|}{|E|}\Big)^p,
$$
for every ball $B$ in $\mathbb{R}^d$ and every measurable set $E\subset B$.

(c) If $w\in A_p^{\rho,\theta}(\mathbb R^d)$, for every $c\geq 1$, there exists $C>0$ such that
$$
\frac{w(2^kB)}{w(B)}\leq C 2^{kp(\theta +d)},
$$
for every $k\in \mathbb{Z}$ and every ball $B=B(x,r)$ being $r\leq c\rho (x)$.
\end{prop}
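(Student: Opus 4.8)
The plan is to prove the three parts in the order in which they build on one another: part (a) is a purely algebraic reformulation of the defining inequality, part (b) follows from that inequality together with Hölder's inequality, and part (c) is then a scaling consequence of (b). Throughout I keep the notation $\Psi_\theta$ from the definition of $A_p^{\rho,\theta}(\mathbb{R}^d)$ given above, and I recall that the defining inequality is imposed on \emph{all} balls $B$.

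\emph{Part (a).} Set $u=w^{-1/(p-1)}$ and use that $p'-1=\tfrac1{p-1}$, so that $u^{-1/(p'-1)}=u^{-(p-1)}=w$. Then, for every ball $B$,
\[
\Big(\tfrac{1}{\Psi_\theta(B)|B|}\int_B u\Big)\Big(\tfrac{1}{\Psi_\theta(B)|B|}\int_B u^{-1/(p'-1)}\Big)^{p'-1}
=\Big[\Big(\tfrac{1}{\Psi_\theta(B)|B|}\int_B w\Big)\Big(\tfrac{1}{\Psi_\theta(B)|B|}\int_B w^{-1/(p-1)}\Big)^{p-1}\Big]^{1/(p-1)}.
\]
Hence the left-hand side is bounded uniformly in $B$ precisely when the bracketed expression on the right is, i.e. $u\in A_{p'}^{\rho,\theta}(\mathbb{R}^d)\iff w\in A_p^{\rho,\theta}(\mathbb{R}^d)$, with the two constants related through the exponent $p-1$. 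This proves (a) in both directions at once.

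\emph{Part (b).} Fix a ball $B$ and a measurable $E\subset B$. Writing $1=w^{1/p}w^{-1/p}$ on $E$ and applying Hölder's inequality with exponents $p,p'$ (note $p'/p=\tfrac1{p-1}$) gives
\[
|E|\le\Big(\int_E w\Big)^{1/p}\Big(\int_E w^{-1/(p-1)}\Big)^{1/p'}\le\Big(\int_E w\Big)^{1/p}\Big(\int_B w^{-1/(p-1)}\Big)^{1/p'},
\]
so that $w(E)\ge|E|^p\big(\int_B w^{-1/(p-1)}\big)^{-(p-1)}$. On the other hand, the $A_p^{\rho,\theta}$ condition on $B$ reads $\big(\int_B w^{-1/(p-1)}\big)^{p-1}\le C\,(\Psi_\theta(B)|B|)^p/w(B)$; inserting this into the previous bound yields $w(E)\ge C^{-1}|E|^p\,w(B)\,(\Psi_\theta(B)|B|)^{-p}$, which is exactly $w(B)/w(E)\le C\big(\Psi_\theta(B)|B|/|E|\big)^p$. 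Since only the defining inequality on the single ball $B$ was used, this holds for every ball $B$.

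\emph{Part (c).} For $k\ge 0$, apply (b) with the ball $2^kB=B(x,2^kr)$ (which has the same centre $x$ as $B$) and the subset $B\subset 2^kB$:
\[
\frac{w(2^kB)}{w(B)}\le C\Big(\frac{\Psi_\theta(2^kB)\,|2^kB|}{|B|}\Big)^p=C\,\big(\Psi_\theta(2^kB)\big)^p\,2^{kdp}.
\]
Since $r\le c\rho(x)$ and $2^k\ge1$, one has $\Psi_\theta(2^kB)=(1+2^kr/\rho(x))^\theta\le(1+c2^k)^\theta\le((1+c)2^k)^\theta$, whence $(\Psi_\theta(2^kB))^p\le(1+c)^{\theta p}2^{k\theta p}$, and the asserted bound $w(2^kB)/w(B)\le C\,2^{kp(\theta+d)}$ follows after absorbing $(1+c)^{\theta p}$ into $C$; for $k\le 0$ one simply uses $2^kB\subseteq B$, hence $w(2^kB)\le w(B)$. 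I do not expect any serious obstacle here: (a) is algebra, (b) is one application of Hölder plus the definition, and (c) is one application of (b). The only point that needs a little care is that $\Psi_\theta$ is defined through the \emph{centre} of the ball, so in (c) the centre must be kept fixed at $x$ and the hypothesis $r\le c\rho(x)$ exploited to estimate $\Psi_\theta(2^kB)$ — which is exactly why that restriction appears — and in (b) the $A_p^{\rho,\theta}$ inequality must be applied to $B$ rather than to $E$, since $E$ need not be a ball.
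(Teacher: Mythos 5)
Your arguments for (a) and (b) are correct. Part (a) is precisely the algebraic identity relating the $A_p^{\rho,\theta}$ and $A_{p'}^{\rho,\theta}$ quantities (the two constants being linked through the power $1/(p-1)$), and part (b) is the standard H\"older argument, applied, as you rightly stress, to the ball $B$ rather than to $E$. Note that the paper offers no proof of this proposition at all — it simply cites \cite[Lemma 2.2]{Ta} and \cite[Proposition 2.4]{TZ} — so a self-contained verification along these standard lines is exactly what is behind the citation. Your proof of (c) in the range $k\geq 0$ is also correct, including the two points you flag: $\Psi_\theta$ is evaluated at the fixed centre $x$ of $B$ and of $2^kB$, and the hypothesis $r\leq c\rho(x)$ is what allows $\Psi_\theta(2^kB)\leq (1+c)^{\theta}2^{k\theta}$.

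The genuine flaw is your final sentence, the case $k\leq 0$. The inclusion $2^kB\subseteq B$ only gives $w(2^kB)/w(B)\leq 1$, whereas for $k<0$ the asserted right-hand side $C2^{kp(\theta+d)}$ tends to $0$ as $k\to-\infty$; a bound by $1$ does not imply it, so monotonicity proves nothing here. In fact no argument can close this case, because for $k\leq 0$ the inequality as printed is false in general: take $w\equiv 1$, which lies in every $A_p^{\rho,\theta}(\mathbb{R}^d)$; then $w(2^kB)/w(B)=2^{kd}$, and since $p(\theta+d)>d$ one has $2^{kd}/2^{kp(\theta+d)}=2^{|k|(p(\theta+d)-d)}\to\infty$ as $k\to-\infty$, so no finite $C$ can work uniformly in $k\leq 0$. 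The statement must therefore be read with $k\geq 0$ (i.e.\ $k\in\mathbb{N}$), which is the only way it is invoked in the paper — always to compare a ball with its dilates $2^kB$, $k\geq 0$, as in \eqref{difint}, \eqref{dif} and \eqref{promedioB} and in Section \ref{S6} — and with that reading your proof is complete. For shrunken balls what your part (b) actually yields is the reverse-type estimate $w(B)/w(2^kB)\leq C\,\Psi_\theta(B)^p\,2^{-kdp}$, not the stated inequality.
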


Concerning to Morrey-Campanato spaces ${\rm BMO}_{\mathcal{L},w}^\alpha (\mathbb R^d)$ we will use the following result.
\begin{prop}(\cite[Corollary 2.1]{TZ})\label{Prop2.4}
Let $1<p<\infty$, $\theta >0$, $\alpha \in [0,1)$, $\nu\in (1,p']$, and $w\in A_p^{\rho ,\theta }(\mathbb R^d)$. For every $c\geq 1$, there exist $C>0$ such that, if $f\in {\rm BMO}_{\mathcal{L},w}^\alpha (\mathbb R^d)$ then 
\begin{equation}\label{2.1}
\frac{1}{|B|^\alpha }\left(\frac{1}{w(B)}\int_B|f(y)-f_B|^\nu w(y)^{1-\nu}dy\right)^{1/\nu}\leq C\|f\|_{{\rm BMO}_{\mathcal{L},w}^\alpha (\mathbb R^d)},
\end{equation}
for every $B=B(x,r)$ being $0<r\leq c\rho (x)$, and, for a certain $\gamma>0$,
\begin{equation}\label{2.2}
\frac{1}{|B|^\alpha }\left(\frac{1}{w(B)}\int_B|f(y)|^\nu w(y)^{1-\nu}dy\right)^{1/\nu}\leq C\Big(1+\frac{r}{\rho (x)}\Big)^{\gamma}\|f\|_{{\rm BMO}_{\mathcal{L},w}^\alpha (\mathbb R^d)}
\end{equation}
for every $B=B(x,r)$ with $r\geq \rho (x)$. 

\end{prop}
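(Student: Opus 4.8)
The plan is to obtain Proposition \ref{Prop2.4} as a weighted John--Nirenberg-type self-improvement: I would prove \eqref{2.1} on subcritical balls ($0<r\le c\rho(x)$) and then reduce \eqref{2.2} on supercritical balls ($r\ge\rho(x)$) to the subcritical case through a covering by critical balls.

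For \eqref{2.1}, fix $B_0=B(x_0,r_0)$ with $0<r_0\le c\rho(x_0)$. By \cite[Lemma 1.4]{Sh1} there is $c_0=c_0(c)\ge1$ with $c_0^{-1}\rho(x_0)\le\rho(y)\le c_0\rho(x_0)$ for all $y\in B_0$, hence $\Psi_\theta(B)\le(1+2cc_0)^\theta$ for every ball $B=B(y,s)$ with $y\in B_0$ and $s\le2r_0$; so $w$, and by Proposition \ref{Prop2.3}(a) also $w^{-1/(p-1)}$, satisfies a classical $A_p$ (respectively $A_{p'}$), in particular a doubling, condition on the sub-balls of $B_0$ with constants independent of $B_0$. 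Applying \eqref{1.1} when $s<\rho(y)$ and \eqref{1.2} together with $|f_B|\le|B|^{-1}\int_B|f|$ when $s\ge\rho(y)$, and using $|B|\le|B_0|$ and $\alpha\ge0$, one gets
\[
\frac{1}{w(B)}\int_B|f(y)-f_B|\,dy\le C\,\|f\|_{{\rm BMO}_{\mathcal L,w}^\alpha(\mathbb R^d)}\,|B_0|^\alpha,\qquad B\subseteq B_0;
\]
that is, on $B_0$ the function $f$ lies in the weighted mean-oscillation space attached to the averaging functional $B\mapsto w(B)^{-1}\int_B|\,\cdot-(\cdot)_B|$. The weighted John--Nirenberg inequality then self-improves this to
\[
\frac{1}{w(B_0)}\int_{B_0}|f(y)-f_{B_0}|^\nu w(y)^{1-\nu}\,dy\le\big(C\,\|f\|_{{\rm BMO}_{\mathcal L,w}^\alpha(\mathbb R^d)}\,|B_0|^\alpha\big)^\nu,\qquad\nu\in(1,p'],
\]
which, after taking $\nu$-th roots, is \eqref{2.1}. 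I would prove this inequality by the Calder\'on--Zygmund stopping-time iteration adapted to the weighted functional: at level $\lambda$ select the maximal sub-balls on which $w(\cdot)^{-1}\int|f-f_{B_0}|>\lambda$, use the uniform doubling of $w$ to control the averages on the selected balls and the Lebesgue differentiation theorem for the measures $|f|\,dy$ and $w\,dy$ to control $f$ off their union, and iterate to obtain geometric decay of the super-level sets; the passage to the exponent $\nu$ then rests on the facts that $w^{1-\nu}=(w^{-1/(p-1)})^{(\nu-1)(p-1)}$ is locally integrable with $B_0$-average controlled precisely because $0<\nu-1\le1/(p-1)$ (reverse H\"older inequality for the $A_{p'}$ weight $w^{-1/(p-1)}$), and that the $A_p$ relation $(|B|^{-1}\int_Bw)(|B|^{-1}\int_Bw^{-1/(p-1)})^{p-1}\le C$ absorbs the normalization $w(B_0)^{-1}$.

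For \eqref{2.2}, let $B_0=B(x_0,r_0)$ with $r_0\ge\rho(x_0)$ and let $\{B_i=B(y_i,\rho(y_i))\}_i$ be the members meeting $B_0$ of a standard covering of $\mathbb R^d$ by critical balls with bounded overlap; by \cite[Lemma 1.4]{Sh1} one has $\rho(y_i)\le Cr_0$, so $B_i\subseteq CB_0$ and $|B_i|\le C|B_0|$ for all $i$. On each $B_i$, \eqref{1.2} gives $|B_i|^{-\alpha}w(B_i)^{-1}\int_{B_i}|f|\le C\|f\|_{{\rm BMO}_{\mathcal L,w}^\alpha(\mathbb R^d)}$, the subcritical argument applied on $B_i$ (where $\Psi_\theta(B_i)=2^\theta$) gives $w(B_i)^{-1}\int_{B_i}|f-f_{B_i}|^\nu w^{1-\nu}\,dy\le(C\|f\|_{{\rm BMO}_{\mathcal L,w}^\alpha(\mathbb R^d)}|B_i|^\alpha)^\nu$, and, since $|f_{B_i}|\le C\|f\|_{{\rm BMO}_{\mathcal L,w}^\alpha(\mathbb R^d)}|B_i|^{\alpha-1}w(B_i)$, a short computation using Jensen's inequality and the $A_p$ relation above shows $|f_{B_i}|^\nu w(B_i)^{-1}\int_{B_i}w^{1-\nu}\,dy\le(C\|f\|_{{\rm BMO}_{\mathcal L,w}^\alpha(\mathbb R^d)}|B_i|^\alpha)^\nu$; combining, $w(B_i)^{-1}\int_{B_i}|f|^\nu w^{1-\nu}\,dy\le(C\|f\|_{{\rm BMO}_{\mathcal L,w}^\alpha(\mathbb R^d)}|B_i|^\alpha)^\nu$. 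Raising to the power $\nu$, summing over $i$, using bounded overlap and then Proposition \ref{Prop2.3}(b) for the pair $B_0\subseteq CB_0$ to get $\sum_iw(B_i)\le Cw(CB_0)\le C\Psi_\theta(CB_0)^pw(B_0)\le C(1+r_0/\rho(x_0))^{p\theta}w(B_0)$, together with $|B_i|^\alpha\le|B_0|^\alpha$, yields \eqref{2.2} with $\gamma=p\theta/\nu$.

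The main obstacle is the weighted John--Nirenberg step. Because $w$ need not belong to $A_1(\mathbb R^d)$, the super-level sets of $f-f_{B_0}$ are not pointwise dominated by multiples of $w$, so the Lebesgue averages $|f_Q|\le C\|f\|_{{\rm BMO}_{\mathcal L,w}^\alpha(\mathbb R^d)}(w(Q)/|Q|)|Q|^\alpha$ appearing along the stopping tree can only be measured against $w^{1-\nu}$ after integration; this compels one to run the iteration jointly with the reverse H\"older self-improvement of $w^{-1/(p-1)}$, and it is exactly that coupling which confines the admissible exponents to $\nu\le p'$ (hence the appearance of $p'$ in the statement). A secondary, bookkeeping, point is to make the covering argument for \eqref{2.2} precise enough to exhibit the explicit polynomial factor $(1+r_0/\rho(x_0))^\gamma$, tracking how the overlap constant, Proposition \ref{Prop2.3}(b), and $\Psi_\theta(B_0)$ compound.
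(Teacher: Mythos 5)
The paper offers no proof of this proposition: it is imported verbatim from \cite[Corollary 2.1]{TZ}, so there is no internal argument to compare against. Your reconstruction follows what is essentially the route of the cited source. On sub-critical balls $B_0=B(x_0,r_0)$, $r_0\le c\rho(x_0)$, the equivalence $\rho(y)\sim\rho(x_0)$ on $B_0$ (Shen's Lemma 1.4) makes $\Psi_\theta$ uniformly bounded on all relevant sub-balls, so $w$ and $w^{-1/(p-1)}$ behave like classical $A_p$ and $A_{p'}$ weights there; the two defining inequalities \eqref{1.1} and \eqref{1.2} then give the linear oscillation bound $w(B)^{-1}\int_B|f-f_B|\le C\|f\|_{{\rm BMO}_{\mathcal L,w}^\alpha(\mathbb R^d)}|B_0|^\alpha$ for every $B\subseteq B_0$, and the weighted John--Nirenberg theorem of Muckenhoupt--Wheeden upgrades this to exponent $\nu\le p'$, which is \eqref{2.1}. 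The super-critical case via the covering of \cite[Proposition 5]{DGMTZ}, the per-ball estimates, bounded overlap, and Proposition \ref{Prop2.3}, (b), is carried out correctly and your bookkeeping does produce an admissible $\gamma$ (you get $p\theta/\nu$, and the statement only asks for some $\gamma>0$). The one place where your argument is a sketch rather than a proof is the self-improvement step itself: the stopping-time iteration with the non-uniform increments $|f_Q-f_{Q'}|\lesssim\lambda\,w(Q')/|Q'|$ along the stopping tree, coupled with the reverse H\"older inequality for $w^{-1/(p-1)}\in A_{p'}$, is precisely the content of the Muckenhoupt--Wheeden theorem on ${\rm BMO}(w)$ for $A_p$ weights (and of the John--Nirenberg-type lemma in \cite{TZ} from which their Corollary 2.1 is derived); you correctly identify both the mechanism and the reason the admissible exponents stop at $p'$, so invoking that known theorem closes the argument. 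I see no step that would fail.
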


\section{Proof of Theorem \ref{Th1.2} for the variation operator $V_\sigma (\{t^k\partial_t^kW_t^\mathcal{L}\}_{t>0})$}\label{S3}

We have to see that there exists $C>0$ such that, for every $f\in {\rm BMO}_{\mathcal L,w}^\alpha (\mathbb R^d)$,

(i) for every $x_0\in \mathbb R^d$,
$$
\int_B|V_\sigma (\{t^k\partial _t^kW_t^\mathcal L\}_{t>0})(f)(x)|dx\leq C|B|^\alpha w(B)\|f\|_{{\rm BMO}_{\mathcal L,w}^\alpha(\mathbb R^d)},
$$
where $B=B(x_0,\rho (x_0))$;

(ii) for each $x_0\in \mathbb R^d$ and $0<r<\rho (x_0)$,
$$
\int_B(V_\sigma (\{t^k\partial _t^kW_t^\mathcal L\}_{t>0})(f)(x)-\alpha (B,f))dx\leq C|B|^\alpha w(B)\|f\|_{{\rm BMO}_{\mathcal L,w}^\alpha(\mathbb R^d)},
$$
where $\alpha (B,f)={\rm ess}\inf_{y\in B} V_\sigma (\{t^k\partial _t^kW_t^\mathcal L\}_{t>0})(f)(y)$ and $B=B(x_0,r)$.

In \cite[Theorem 4]{TZ} it was proved the variation operator $V_\sigma(\{W_t^\mathcal{L}\}_{t>0})$ is bounded from $L^p(\mathbb{R}^d,w)$ into itself. According to Proposition \ref{Prop2.1} the $k$-th derivative $\partial_t^kW_t^\mathcal{L}(x,y)$ of the heat kernel satisfies all the properties that we need to establish, by proceeding as in the proof of \cite[Theorem 4]{TZ}, that the variation operator $V_\sigma(\{t^k\partial_t^kW_t^\mathcal{L}\}_{t>0})$ is bounded from $L^p(\mathbb{R}^d,w)$ into itself. Then, by using Proposition \ref{Prop2.1}, (a), as in the proof of \cite[Theorem 5, p. 610]{TZ}, we can see that the property (i) holds.

We are going to prove (ii). Let $f\in {\rm BMO}_{\mathcal L,w}^\alpha (\mathbb R^d)$, $x_0\in \mathbb R^d$ and $0<r_0<\rho (x_0)$. We take $0<t_n<t_{n-1}<...<t_1$. In the case that $t_{i_0+1}<8r_0^2\leq t_{i_0}$ for some $i_0\in \{1,...,n-1\}$, by understanding the sums in the suitable way when $i_0=n-1$, Minkowski inequality implies that 
\begin{align*}
\Big(\sum_{i=1}^{n-1}|t^k\partial _t^kW_t^\mathcal L(f)(x)_{|t=t_{i+1}}-t^k\partial _t^kW_t^\mathcal L(f)(x)_{|t=t_i}|^\sigma\Big)^{1/\sigma}&\\
&\hspace{-6cm}=\Big[\Big(\sum_{i=1}^{i_0-1}+\sum_{i=i_0+1}^{n-1}\Big)|t^k\partial _t^kW_t^\mathcal L(f)(x)_{|t=t_{i+1}}-t^k\partial _t^kW_t^\mathcal L(f)(x)_{|t=t_i}|^\sigma\\
&\hspace{-6cm}\quad +\big|(t^k\partial _t^kW_t^\mathcal L(f)(x)_{|t=t_{i_0+1}}-t^k\partial _t^kW_t^\mathcal L(f)(x)_{|t=8r_0^2})\\
&\hspace{-6cm}\quad +(t^k\partial _t^kW_t^\mathcal L(f)(x)_{|t=8r_0^2}-t^k\partial _t^kW_t^\mathcal L(f)(x)_{|t=t_{i_0}})\big|^\sigma \Big]^{1/\sigma}\\
&\hspace{-6cm}\leq\Big[\sum_{i=1}^{i_0-1}|t^k\partial _t^kW_t^\mathcal L(f)(x)_{|t=t_{i+1}}-t^k\partial _t^kW_t^\mathcal L(f)(x)_{|t=t_i}|^\sigma\\
&\hspace{-6cm}\quad +|t^k\partial _t^kW_t^\mathcal L(f)(x)_{|t=8r_0^2}-t^k\partial _t^kW_t^\mathcal L(f)(x)_{|t=t_{i_0}}\big|^\sigma\Big]^{1/\sigma} \\
&\hspace{-6cm}\quad +\Big[\sum_{i=i_0+1}^{n-1}|t^k\partial _t^kW_t^\mathcal L(f)(x)_{|t=t_{i+1}}-t^k\partial _t^kW_t^\mathcal L(f)(x)_{|t=t_i}|^\sigma\\
&\hspace{-6cm}\quad +\big|(t^k\partial _t^kW_t^\mathcal L(f)(x)_{|t=t_{i_0+1}}-t^k\partial _t^kW_t^\mathcal L(f)(x)_{|t=8r_0^2})\big|^\sigma \Big]^{1/\sigma},\quad x\in \mathbb R^d,
\end{align*}
and, if $8r_0^2\leq t_n$ we can write
\begin{align*}
\Big(\sum_{i=1}^{n-1}|t^k\partial _t^kW_t^\mathcal L(f)(x)_{|t=t_{i+1}}-t^k\partial _t^kW_t^\mathcal L(f)(x)_{|t=t_i}|^\sigma\Big)^{1/\sigma}\\
&\hspace{-6cm}\leq \Big(|t^k\partial _t^kW_t^\mathcal L(f)(x)_{|t=8r_0^2}-t^k\partial _t^kW_t^\mathcal L(f)(x)_{|t=t_n}|^\sigma\\
&\hspace{-6cm}\quad +\sum_{i=1}^{n-1}|t^k\partial _t^kW_t^\mathcal L(f)(x)_{|t=t_{i+1}}-t^k\partial _t^kW_t^\mathcal L(f)(x)_{|t=t_i}|^\sigma\Big)^{1/\sigma},\quad x\in \mathbb R^d.
\end{align*}
We thus deduce that
$$
V_\sigma (\{t^k\partial _t^kW_t^\mathcal L\}_{t>0})(f)\leq V_\sigma (\{t^k\partial _t^kW_t^\mathcal L\}_{t\in (0, 8r_0^2]})(f)+V_\sigma (\{t^k\partial _t^kW_t^\mathcal L\}_{t\in [8r_0^2,\infty )})(f).
$$
On the other hand, it is clear that
$$
V_\sigma (\{t^k\partial _t^kW_t^\mathcal L\}_{t>0})(f)\geq V_\sigma (\{t^k\partial _t^kW_t^\mathcal L\}_{t\in [8r_0^2,\infty )})(f).
$$
Also we have that
\begin{align*}
V_\sigma (\{t^k\partial _t^kW_t^\mathcal L\}_{t\in [8r_0^2,\infty )})(f)(x)-{\rm ess}\hspace{-2mm}\inf_{\hspace{-5mm}y\in B(x_0,r_0)}V_\sigma (\{t^k\partial _t^kW_t^\mathcal L\}_{t>0})(f)(y)&\\
&\hspace{-8cm} \leq V_\sigma (\{t^k\partial _t^kW_t^\mathcal L\}_{t\in [8r_0^2,\infty )})(f)(x)-{\rm ess}\hspace{-2mm}\inf_{\hspace{-5mm}y\in B(x_0,r_0)}V_\sigma (\{t^k\partial _t^kW_t^\mathcal L\}_{t\in [8r_0^2,\infty)})(f)(y)\\
&\hspace{-8cm} \leq {\rm ess}\hspace{-2mm}\sup_{\hspace{-5mm}z,y\in B(x_0,r_0)}\big|V_\sigma (\{t^k\partial _t^kW_t^\mathcal L\}_{t\in [8r_0^2,\infty )})(f)(z)-V_\sigma (\{t^k\partial _t^kW_t^\mathcal L\}_{t\in [8r_0^2,\infty)})(f)(y)\big|\\
&\hspace{-8cm} \leq {\rm ess}\hspace{-2mm}\sup_{\hspace{-5mm}z,y\in B(x_0,r_0)}\sup_{8r_0^2\leq t_n<...<t_1}\Big(
\sum_{i=1}^{n-1}\big|(t^k\partial _t^kW_t^\mathcal L(f)(z)_{|t=t_i}-t^k\partial _t^kW_t^\mathcal L(f)(z)_{|t=t_{i+1}}\big)\\
&\hspace{-8cm}\quad -(t^k\partial _t^kW_t^\mathcal L(f)(y)_{|t=t_i}-t^k\partial _t^kW_t^\mathcal L(f)(y)_{|t=t_{i+1}})\big|^\sigma\Big)^{1/\sigma },\quad \mbox{ a.e. } x\in B(x_0,r_0).
\end{align*}
It follows that
\begin{align*}
\int_{B(x_0,r_0)}\Big(V_\sigma (\{t^k\partial _t^kW_t^\mathcal L\}_{t>0})(f)(x)-{\rm ess}\hspace{-2mm}\inf_{\hspace{-5mm}y\in B(x_0,r_0)}V_\sigma (\{t^k\partial _t^kW_t^\mathcal L\}_{t>0})(f)(y)\Big)dx\\
&\hspace{-11cm}\leq \int_{B(x_0,r_0)}V_\sigma (\{t^k\partial _t^kW_t^\mathcal L\}_{t\in (0,8r_0^2]})(f)(x)dx\\
&\hspace{-11cm}\quad +|B(x_0,r_0)|\;\;{\rm ess}\hspace{-2mm}\sup_{\hspace{-5mm}z,y\in B(x_0,r_0)}\sup_{8r_0^2\leq t_n<...<t_1}\Big(
\sum_{i=1}^{n-1}\big|[t^k\partial _t^kW_t^\mathcal L(f)(z)_{|t=t_i}-t^k\partial _t^kW_t^\mathcal L(f)(z)_{|t=t_{i+1}}]\\
&\hspace{-11cm}\quad -[t^k\partial _t^kW_t^\mathcal L(f)(y)_{|t=t_i}-t^k\partial _t^kW_t^\mathcal L(f)(y)_{|t=t_{i+1}}]\big|^\sigma\Big)^{1/\sigma }\\
&\hspace{-11cm}=:G_1(f)+G_2(f).
\end{align*}
We now estimate $G_1(f)$ and $G_2(f)$ separately. Firstly we consider $G_1(f)$. The function $f$ is decomposed as follows:
$$
f=(f-f_{B(x_0,r_0)})\mathcal{X}_{B(x_0,2r_0)}+(f-f_{B(x_0,r_0)})\mathcal{X}_{B(x_0,2r_0)^c}+f_{B(x_0,r_0)}=:f_1+f_2+f_3.
$$
It is clear that $G_1(f)\leq \sum_{j=1}^3G_1(f_j)$. Also, by Proposition \ref{Prop2.3}, (a), $w^{-\frac{1}{p-1}}\in A_{p'}^{\rho ,\theta}(\mathbb R^d)$. Then, $V_\sigma(\{t^k\partial_t^kW_t^\mathcal{L}\}_{t>0})$ is bounded from $L^{p'}(\mathbb{R}^d,w^{-\frac{1}{p-1}})$ into itself. It follows that
\begin{align}
G_1(f_1)&\leq w(B(x_0,r_0))^{1/p}\left(\int_{\mathbb R^d}|V_\sigma (\{t^k\partial _t^kW_t^\mathcal{L}\}_{t>0})(f_1)(x)|^{p'}w^{-\frac{1}{p-1}}(x)dx\right)^{1/p'}\nonumber\\
&\leq Cw(B(x_0,r_0))^{1/p}\left(\int_{B(x_0,2r_0)}|f(x)-f_{B(x_0,r_0)}|^{p'}w^{-\frac{1}{p-1}}(x)dx\right)^{1/p'}\nonumber\\
&\leq Cw(B(x_0,r_0))^{1/p}\left(\Big(\int_{B(x_0,2r_0)}|f(x)-f_{B(x_0,2r_0)}|^{p'}w^{-\frac{1}{p-1}}(x)dx\Big)^{1/p'}\right.\nonumber\\
&\quad +\left.|f_{B(x_0,2r_0)}-f_{B(x_0,r_0)}|\Big(\int_{B(x_0,2r_0)}w(x)^{-\frac{1}{p-1}}dx\Big)^{1/p'}\right)\nonumber\\
&\leq Cw(B(x_0,r_0))^{1/p}\Big(w(B(x_0,2r_0))^{1/p'}|B(x_0,2r_0)|^\alpha \|f\|_{{\rm BMO}_{\mathcal L,w}^\alpha (\mathbb R^d)} \label{D1}\\
&\quad +\frac{1}{w(B(x_0,2r_0))^{1/p}}\int_{B(x_0,2r_0)}|f(x)-f_{B(x_0,2r_0)}|dx\Big)\nonumber\\
&\leq Cw(B(x_0,r_0))^{1/p}w(B(x_0,2r_0)^{1/p'}|B(x_0,2r_0)|^\alpha \|f\|_{{\rm BMO}_{\mathcal L,w}^\alpha (\mathbb R^d)}\nonumber\\[0.2cm]
&\leq C|B(x_0,r_0)|^\alpha w(B(x_0,r_0))\|f\|_{{\rm BMO}_{\mathcal L,w}^\alpha (\mathbb R^d)}.\label{D2}
\end{align}
In \eqref{D1} we use estimate \eqref{2.1} and that $w\in A_p^{\rho ,\theta}(\mathbb R^d)$. In \eqref{D2} we have taken into account Proposition \ref{Prop2.3}, (c).

To analyze $G_1(f_2)$ we write 
\begin{align*}
G_1(f_2)&=\int_{B(x_0,r_0)}\sup_{0<t_n<...<t_1\leq 8r_0^2}\Big(\sum_{i=1}^{n-1}\Big|\int_{t_{i+1}}^{t_i}\partial _t(t^k\partial _t^kW_t^\mathcal L(f_2)(x))dt\Big|^\sigma\Big)^{1/\sigma}dx\\
&\leq \int_{B(x_0,r_0)}\int_0^{8r_0^2}\big|\partial _t(t^k\partial _t^kW_t^\mathcal L(f_2)(x))\big|dtdx.
\end{align*}
According to Proposition \ref{Prop2.1}, (a), we have that
\begin{align*}
G_1(f_2)&\leq C\int_{B(x_0,r_0)}\int_{\mathbb R^d\setminus B(x_0,2r_0)}|f(y)-f_{B(x_0,r_0)}|\int_0^{8r_0^2}e^{-c\frac{|x-y|^2}{t}}t^{-\frac{d}{2}-1}dtdydx\\
&\leq C\int_{B(x_0,r_0)}\int_{\mathbb R^d\setminus B(x_0,2r_0)}|f(y)-f_{B(x_0,r_0)}|\frac{e^{-c\frac{|x-y|^2}{r_0^2}}}{|x-y|^d}dydx\\
&\leq C\int_{B(x_0,r_0)}\int_{\mathbb R^d\setminus B(x_0,2r_0)}|f(y)-f_{B(x_0,r_0)}|\frac{e^{-c\frac{|x_0-y|^2}{r_0^2}}}{|x_0-y|^d}dydx\\
&\leq C|B(x_0,r_0)|\sum_{i=1}^\infty \frac{e^{-c2^{2i}}}{(2^ir_0)^d}\int_{B(x_0,2^{i+1}r_0)\setminus B(x_0,2^ir_0)}|f(y)-f_{B(x_0,r_0)}|dy\\
&\leq C\sum_{i=1}^\infty \frac{e^{-c2^{2i}}}{2^{id}}\int_{B(x_0,2^{i+1}r_0)}|f(y)-f_{B(x_0,r_0)}|dy\\
&\leq C\sum_{i=1}^\infty \frac{e^{-c2^{2i}}}{2^{id}}\Big(\int_{B(x_0,2^{i+1}r_0)}|f(y)-f_{B(x_0,2^{i+1}r_0)}|dy\\
&\quad +|B(x_0,2^{i+1}r_0)|\sum_{j=0}^i|f_{B(x_0,2^{j+1}r_0)}-f_{B(x_0,2^jr_0)}|\Big).
\end{align*}
We now observe that, for every $n\in \mathbb{N}$, according to Proposition \ref{Prop2.3}, (c),
\begin{align}\label{difint}
\int_{B(x_0,2^nr_0)}|f(y)-f_{B(x_0,2^nr_0)}|dy&\leq C|B(x_0,2^nr_0)|^\alpha w(B(x_0,2^nr_0))\|f\|_{{\rm BMO}_{\mathcal L,w}^\alpha (\mathbb R^d)}\nonumber\\
&\leq C2^{n(d(p +\alpha)+p\theta)}|B(x_0,r_0)|^\alpha w(B(x_0,r_0))\|f\|_{{\rm BMO}_{\mathcal L,w}^\alpha (\mathbb R^d)},
\end{align}
and
\begin{align}\label{dif}
|f_{B(x_0,2^{n+1}r_0)}-f_{B(x_0,2^nr_0)}|&\leq \frac{1}{|B(x_0,2^nr_0)|}\int_{B(x_0,2^{n+1}r_0)}|f(y)-f_{B(x_0,2^{n+1}r_0)}|dy\nonumber\\
&\leq C2^{n(d(p+\alpha -1)+p\theta)}|B(x_0,r_0)|^{\alpha -1}w(B(x_0,r_0))\|f\|_{{\rm BMO}_{\mathcal L,w}^\alpha (\mathbb R^d)}.
\end{align}
Thus,
\begin{align*}
G_1(f_2)&\leq C|B(x_0,r_0)|^\alpha w(B(x_0,r_0))\|f\|_{{\rm BMO}_{\mathcal L,w}^\alpha (\mathbb R^d})\sum_{i=1}^\infty e^{-c2^{2i}}\Big(2^{i(d(p+\alpha -1)+p\theta)}+\sum_{j=0}^i2^{j(d(p+\alpha -1)+p\theta)}\Big)\\
&\leq C|B(x_0,r_0)|^\alpha w(B(x_0,r_0))\|f\|_{{\rm BMO}_{\mathcal L,w}^\alpha (\mathbb R^d}).
\end{align*}

Let us deal now with $G_1(f_3)$. By $\{W_t\}_{t>0}$ we denote the classical heat semigroup, that is, for every $t>0$,
$$
W_t(f)=\int_{\mathbb{R}^d}W_t(x-y)f(y)dy,\,\,\,x\in \mathbb{R}^d,
$$
where
$$
W_t(z)=\frac{1}{(4\pi t)^{d/2}}e^{-|z|^2/4t}, \,\,\,z\in \mathbb{R}^d.
$$
By taking into account that $\partial _t(t^k\partial _t^kW_t(1)(x))=0$, $x\in \mathbb R^d$ and $t>0$, we can write
\begin{align*}
G_1(f_3)&\leq |f_{B(x_0,r_0)}|\int_{B(x_0,r_0)}\int_0^{8r_0^2}\Big|\int_{\mathbb R^d}\partial _t(t^k\partial _t^kW_t^{\mathcal L}(x,y))dy\Big|dtdx\\
&=|f_{B(x_0,r_0)}|\int_{B(x_0,r_0)}\int_0^{8r_0^2}\Big|\int_{\mathbb R^d}\partial _t(t^k\partial _t^kW_t^{\mathcal L}(x,y)-t^k\partial _t^kW_t(x-y))dy\Big|dtdx\\
&\leq |f_{B(x_0,r_0)}|\\
&\quad \times \int_{B(x_0,r_0)}\int_0^{8r_0^2}\Big(\int_{|x-y|\leq \rho (x_0)}+\int_{|x-y|\geq \rho (x_0)}\Big)|\partial _t(t^k\partial _t^kW_t^{\mathcal L}(x,y)-t^k\partial _t^kW_t(x-y))|dydtdx\\
&=:G_{11}(f_3)+G_{12}(f_3).
\end{align*}

According to Proposition \ref{Prop2.1}(d) we have that
\begin{align*}
G_{11}(f_3)&\leq C|f_{B(x_0,r_0)}|\int_{B(x_0,r_0)}\int_0^{8r_0^2}\int_{|x-y|\leq \rho (x_0)}\Big(\frac{\sqrt{t}}{\rho (x)}\Big)^{2-\frac{d}{q}}\frac{e^{-c\frac{|x-y|^2}{t}}}{t^{\frac{d}{2}+1}}dydtdx\\
&\leq C|f_{B(x_0,r_0)}|\int_{B(x_0,r_0)}\int_{|x-y|\leq \rho (x_0)}\frac{e^{-c\frac{|x-y|^2}{r_0^2}}}{\rho (x)^{2-\frac{d}{q}}}\int_0^{8r_0^2}\frac{e^{-c\frac{|x-y|^2}{t}}}{t^{\frac{d}{2}+\frac{d}{2q}}}dtdydx\\
&\leq C|f_{B(x_0,r_0)}|\rho (x_0)^{\frac{d}{q}-2}\int_{B(x_0,r_0)}\int_{|x-y|\leq \rho (x_0)}\frac{e^{-c\frac{|x-y|^2}{r_0^2}}}{|x-y|^{d+\frac{d}{q}-2}}dydx\\
&\leq C|f_{B(x_0,r_0)}|\rho (x_0)^{\frac{d}{q}-2}|B(x_0,r_0)|\int_0^{\rho (x_0)}e^{-c\frac{s^2}{r_0^2}}s^{1-\frac{d}{q}}ds\\
&\leq C|f_{B(x_0,r_0)}|\rho (x_0)^{\frac{d}{q}-2}|B(x_0,r_0)|\int_0^{\infty}e^{-c\frac{s^2}{r_0^2}}s^{1-\frac{d}{q}}ds\\
&=C|f_{B(x_0,r_0)}||B(x_0,r_0)|\Big(\frac{r_0}{\rho (x_0)}\Big)^{2-\frac{d}{q}}.
\end{align*}
In the third inequality we have taken into account that $\rho (x)\sim \rho (x_0)$ provided that $|x-x_0|<\rho (x_0)$.

On the other hand, by Proposition \ref{Prop2.1}, (a), and since 
\begin{equation}\label{dergaus}
|t^k\partial _t^kW_t(z)|\le \frac{C}{t^{d/2}}e^{-c|z|^2/t},\,\,\,z\in \mathbb{R}^d \,\,\,\hbox{and}\,\,\,t>0,
\end{equation}
we have that
\begin{align*}
G_{12}(f_3)&\leq C|f_{B(x_0,r_0)}|\int_{B(x_0,r_0)}\int_0^{8r_0^2}\int_{|x-y|\geq \rho (x_0)}\frac{e^{-c\frac{|x-y|^2}{t}}}{t^{\frac{d}{2}+1}}dydtdx\\
&\leq C|f_{B(x_0,r_0)}|\int_{B(x_0,r_0)}\int_{|x-y|\geq \rho (x_0)}e^{-c\frac{|x-y|^2}{r_0^2}}\int_0^{8r_0^2}\frac{e^{-c\frac{|x-y|^2}{t}}}{t^{\frac{d}{2}+1}}dtdydx\\
&\leq C|f_{B(x_0,r_0)}|\int_{B(x_0,r_0)}\int_{|x-y|\geq \rho (x_0)}\frac{e^{-c\frac{|x-y|^2}{r_0^2}}}{|x-y|^d}dydx\\
&\leq C|f_{B(x_0,r_0)}||B(x_0,r_0)|\int_{\rho (x_0)}^\infty \frac{e^{-c\frac{s^2}{r_0^2}}}{s}ds\leq C|f_{B(x_0,r_0)}||B(x_0,r_0)|\Big(\frac{r_0}{\rho (x_0)}\Big)^\beta,
\end{align*}
provided that $\beta >0$.

We deduce that, for $\beta >0$,
\begin{equation}\label{G1f3}
G_1(f_3)\leq C|f_{B(x_0,r_0)}||B(x_0,r_0)|\left(\Big(\frac{r_0}{\rho (x_0)}\Big)^{2-\frac{d}{q}}+\Big(\frac{r_0}{\rho (x_0)}\Big)^\beta\right).
\end{equation}
We now choose $i_0\in \mathbb{N}$ such that $2^{i_0}r_0<\rho (x_0)\leq 2^{i_0+1}r_0$. By \eqref{dif} we get
\begin{align}\label{promedioB}
|f_{B(x_0,r_0)}|&\leq \sum_{i=0}^{i_0}|f_{B(x_0,2^{i+1}r_0)}-f_{B(x_0,2^ir_0)}|+|f_{B(x_0,2^{i_0+1}r_0)}|\nonumber\\
&\leq C|B(x_0,r_0)|^{\alpha -1}w(B(x_0,r_0))\|f\|_{{\rm BMO}_{\mathcal L,w}^\alpha (\mathbb R^d)}\sum_{i=0}^{i_0+1}2^{i(d(p+\alpha -1)+p\theta)}\nonumber\\
&\leq C2^{i_0(d(p+\alpha -1)+p\theta)}|B(x_0,r_0)|^{\alpha -1}w(B(x_0,r_0))\|f\|_{{\rm BMO}_{\mathcal L,w}^\alpha (\mathbb R^d)}\nonumber\\
&\leq C\Big(\frac{\rho (x_0)}{r_0}\Big)^{d(p+\alpha -1)+p\theta}|B(x_0,r_0)|^{\alpha -1}w(B(x_0,r_0))\|f\|_{{\rm BMO}_{\mathcal L,w}^\alpha (\mathbb R^d)}.
\end{align}
Since $2-\frac{d}{q}>d(p+\alpha -1)+p\theta$ and taking $\beta =d(p+\alpha -1)+p\theta$ in \eqref{G1f3} we obtain
$$
G_1(f_3)\leq C|B(x_0,r_0)|^\alpha w(B(x_0,r_0))\|f\|_{{\rm BMO}_{\mathcal L, w}^\alpha (\mathbb R^d)}.
$$
By putting together the above estimations we obtain
\begin{equation}\label{*1}
G_1(f)\leq C|B(x_0,r_0)|^\alpha w(B(x_0,r_0))\|f\|_{{\rm BMO}_{\mathcal L, w}^\alpha (\mathbb R^d)}.
\end{equation}

We now deal with $G_2(f)$. We can write
\begin{align*}
G_2(f)&\leq |B(x_0,r_0)|\;{\rm ess}\hspace{-2mm}\sup_{\hspace{-5mm}x,y\in B(x_0,r_0)}\int_{8r_0^2}^\infty \Big|\int_{\mathbb R^d}\partial _t(t^k\partial _t^kW_t^{\mathcal L}(x,z)-t^k\partial _t^kW_t^{\mathcal L}(y,z))f(z)dz\Big|dt\\
&\leq |B(x_0,r_0)|\\
&\quad \times\;{\rm ess}\hspace{-2mm}\sup_{\hspace{-5mm}x,y\in B(x_0,r_0)}\left(\int_{8\rho(x_0)^2}^\infty +\int_{8r_0^2}^{8\rho(x_0)^2}\right)\Big|\int_{\mathbb R^d}\partial _t(t^k\partial _t^kW_t^{\mathcal L}(x,z)-t^k\partial _t^kW_t^{\mathcal L}(y,z))f(z)dz\Big|dt\\
&=: G_{21}(f)+G_{22}(f).
\end{align*}

We firstly estimate $G_{21}(f)$. According to Proposition \ref{Prop2.1}, (b), we deduce that, for every $0<\delta <\delta_0$, there exists $C>0$ such that, for each $x,y\in B(x_0,r_0)$ and $t>\rho(x_0)^2$,
\begin{align*}
\Big|\int_{\mathbb R^d}\partial _t(t^k\partial _t^kW_t^{\mathcal L}(x,z)-t^k\partial_t^kW_t^{\mathcal L}(y,z))f(z)dz\Big|&\leq C\int_{\mathbb R^d} \Big(\frac{|x-y|}{\sqrt{t}}\Big)^\delta\frac{e^{-c\frac{|y-z|^2}{t}}}{t^{\frac{d}{2}+1}}|f(z)|dz\\
&\hspace{-7cm}\leq \frac{C}{t^{\frac{d}{2}+1}}\Big(\frac{|x-y|}{\sqrt{t}}\Big)^\delta\left(\sum_{j=0}^\infty e^{-c2^{2j}}\int_{2^{j}\sqrt{t}\leq |y-z|<2^{j+1}\sqrt{t}}|f(z)|dz+\int_{|y-z|<2^{-1}\sqrt{t}}|f(z)|dz\right)\\
&\hspace{-7cm}\leq \frac{C}{t^{\frac{d}{2}+1}}\Big(\frac{|x-y|}{\sqrt{t}}\Big)^\delta\left(\sum_{j=0}^\infty e^{-c2^{2j}}\int_{|x_0-z|<2^{j+1}\sqrt{t}}|f(z)|dz+\int_{|x_0-z|<\sqrt{t}}|f(z)|dz\right)\\
&\hspace{-7cm}\leq \frac{C}{t^{\frac{d}{2}+1}}\Big(\frac{r_0}{\sqrt{t}}\Big)^\delta\|f\|_{{\rm BMO}_{\mathcal L,w}^\alpha (\mathbb R^d)}\sum_{j=0}^\infty e^{-c2^{2j}}|B(x_0, 2^j\sqrt{t})|^\alpha w(B(x_0,2^j\sqrt{t}))\\
&\hspace{-7cm}\leq \frac{C}{t}\Big(\frac{r_0}{\sqrt{t}}\Big)^\delta\|f\|_{{\rm BMO}_{\mathcal L,w}^\alpha (\mathbb R^d)}w(B(x_0,r_0))\frac{t^{\frac{d}{2}(p+\alpha-1)+\frac{p\theta}{2}}}{r_0^{p(\theta+d)}}.
\end{align*}

In the last inequality we have used Proposition \ref{Prop2.3}, (c). For every $x,y\in B(x_0,r_0)$ and $t>8\rho (x_0)^2$, 
\begin{align*}
\Big|\int_{\mathbb R^d}\partial _t(t^k\partial _t^k[W_t^{\mathcal L}(x,z)&-t^kW_t^{\mathcal L}(y,z)])f(z)dz\Big|\\
&\leq Cr_0^\delta w(B(x_0,r_0))|B(x_0,r_0)|^{\alpha-1}\|f\|_{{\rm BMO}_{\mathcal L,w}^\alpha (\mathbb R^d)}\Big(\frac{t}{r_0}\Big)^{\frac{d}{2}(p+\alpha-1)+\frac{p\theta}{2}-\frac{\delta}{2}}\frac{1}{t}.
\end{align*}
It follows that
\begin{align*}
G_{21}(f)&\leq C|B(x_0,r_0)|^\alpha w(B(x_0,r_0))\|f\|_{{\rm BMO}_{\mathcal L, w}^\alpha (\mathbb R^d)}r_0^{\delta -d(p+\alpha -1)-p\theta}\int_{8\rho (x_0)^2}^\infty t^{\frac{d}{2}(p+\alpha -1)-\frac{\delta-p\theta }{2}-1}dt\\
&\leq C|B(x_0,r_0)|^\alpha w(B(x_0,r_0))\|f\|_{{\rm BMO}_{\mathcal L, w}^\alpha (\mathbb R^d)}\Big(\frac{r_0}{\rho (x_0)}\Big)^{\delta-p\theta -d(p+\alpha -1)}\\
&\leq C|B(x_0,r_0)|^\alpha w(B(x_0,r_0))\|f\|_{{\rm BMO}_{\mathcal L, w}^\alpha (\mathbb R^d)},
\end{align*}
provided that $\delta >d(p+\alpha -1)+p\theta$. Note that we can choose this $\delta$ because $\delta_0>d(p+\alpha -1)+p\theta$.

To deal with $G_{22}(f)$ we write, for every $t\in (8r_0^2,8\rho (x_0)^2)$ and $x,y\in B(x_0,r_0)$,
\begin{align*}
\Big|\int_{\mathbb R^d}\partial _t(t^k\partial _t^k[W_t^{\mathcal L}(x,z)-W_t^{\mathcal L}(y,z)])f(z)dz\Big|&\leq \Big|\int_{\mathbb R^d}\partial _t(t^k\partial _t^k[W_t^{\mathcal L}(x,z)-W_t^{\mathcal L}(y,z)])(f(z)-f_{B(x_0,r_0)})dz\Big|\\
&\hspace{-6cm}\quad +|f_{B(x_0,r_0)}|\Big|\int_{\mathbb R^d}\partial _t(t^k\partial _t^k[W_t^{\mathcal L}(x,z)-W_t^{\mathcal L}(y,z)])dz\Big|=: F_1(x,y,t)+F_2(x,y,t).
\end{align*}
Thus,
$$
G_{22}(f)\leq C|B(x_0,r_0)|\sup _{x,y\in B(x_0,r_0)}\int_{8r_0^2}^{8\rho (x_0)^2}(F_1(x,y,t)+F_2(x,y,t))dt.
$$
By using again Proposition \ref{Prop2.1}, (b), for every $0<\delta <\delta_0$, and proceeding as above we have that, for every $x,y\in B(x_0,r_0)$ and $8r_0^2<t<8\rho(x_0)^2$,
\begin{align*}
F_1(x,y,t)&\leq C\int_{\mathbb R^d}\Big(\frac{|x-y|}{\sqrt{t}}\Big)^\delta \frac{e^{-c\frac{|y-z|^2}{t}}}{t^{\frac{d}{2}+1}}|f(z)-f_{B(x_0,r_0)}|dz\\
&\hspace{-1cm}\leq \frac{C}{t^{\frac{d}{2}+1}}\Big(\frac{r_0}{\sqrt{t}}\Big)^\delta\\
&\hspace{-1cm}\quad \times \Big(\sum_{j=0}^\infty e^{-c\frac{2^{2j}r_0^2}{t}}\int_{2^{j}r_0\leq |y-z|<2^{j+1}r_0}|f(z)-f_{B(x_0,r_0)}|dz+\int_{|y-z|<2^{-1}r_0}|f(z)-f_{B(x_0,r_0)}|dz\Big)\\
&\hspace{-1cm}\leq \frac{C}{t^{\frac{d}{2}+1}}\Big(\frac{r_0}{\sqrt{t}}\Big)^\delta\\
&\hspace{-1cm}\quad \times \Big(\sum_{j=0}^\infty e^{-c\frac{2^{2j}r_0^2}{t}}\int_{B(x_0,2^{j+1}r_0)}|f(z)-f_{B(x_0,r_0)}|dz+\int_{B(x_0,r_0)}|f(z)-f_{B(x_0,r_0)}|dz\Big)\\
&\hspace{-1cm}\leq \frac{C}{t^{\frac{d}{2}+1}}\Big(\frac{r_0}{\sqrt{t}}\Big)^\delta\\
&\hspace{-1cm}\quad \times \Big(\sum_{j=0}^\infty e^{-c\frac{2^{2j}r_0^2}{t}}\Big[\int_{B(x_0,2^{j+1}r_0)}|f(z)-f_{B(x_0,2^{j+1}r_0)}|dz\\
&\hspace{-1cm}\quad \quad +|B(x_0,2^{j+1}r_0)|\sum_{i=0}^j|f_{B(x_0,2^{i+1}r_0)}-f_{B(x_0,2^ir_0)}|\Big]+\int_{B(x_0,r_0)}|f(z)-f_{B(x_0,r_0)}|dz\Big).
\end{align*}
Now, according to \eqref{difint} and \eqref{dif} we obtain
\begin{align*}
F_1(x,y,t)&\leq \frac{C}{t^{\frac{d}{2}+1}}\Big(\frac{r_0}{\sqrt{t}}\Big)^\delta|B(x_0,r_0)|^\alpha w(B(x_0,r_0)\|f\|_{{\rm BMO}_{\mathcal L,w}^\alpha (\mathbb R^d)}\\
&\quad \times \Big(\sum_{j=0}^\infty e^{-c\frac{2^{2j}r_0^2}{t}}(j+1)2^{j(d(p+\alpha )+p\theta)}+1\Big),\quad x,y\in B(x_0,r_0),\;8r_0^2<t<8\rho(x_0)^2.
\end{align*}
It follows that
\begin{align*}
\int_{8r_0^2}^{8\rho (x_0)^2}F_1(x,y,t)dt&\leq Cr_0^\delta|B(x_0,r_0)|^\alpha w(B(x_0,r_0))\|f\|_{{\rm BMO}_{\mathcal L,w}^\alpha (\mathbb R^d)}\\
&\hspace{-2cm}\quad \times \left(\sum_{j=0}^\infty (j+1)2^{j(d(p+\alpha )+p\theta)}\int_{8r_0^2}^{8\rho (x_0)^2}\frac{e^{-c\frac{2^{2j}r_0^2}{t}}}{t^{\frac{d+\delta}{2}+1}}dt+\int_{8r_0^2}^{8\rho (x_0)^2}\frac{dt}{t^{\frac{d+\delta}{2}+1}}\right)\\
&\hspace{-2cm}\leq Cr_0^\delta |B(x_0,r_0)|^\alpha w(B(x_0,r_0))\|f\|_{{\rm BMO}_{\mathcal L,w}^\alpha (\mathbb R^d)}\left(\sum_{j=0}^\infty \frac{(j+1)2^{j(d(p+\alpha )+p\theta)}}{(2^jr_0)^{d+\delta }}+\frac{1}{r_0^{d+\delta }}\right)\\
&\hspace{-2cm}\leq C|B(x_0,r_0)|^{\alpha-1} w(B(x_0,r_0))\|f\|_{{\rm BMO}_{\mathcal L,w}^\alpha (\mathbb R^d)},\quad x,y\in B(x_0,r_0), 
\end{align*}
provided that $\delta >d(p+\alpha -1)+p\theta$.

Finally, let $m\in \mathbb{N}$. By Proposition \ref{Prop2.1}, (c), there exists $C>0$ such that
$$
\left|\int_{\mathbb R^d}t^m\partial _t^{m+1}W_t^{\mathcal L}(x,y)dy\right|\leq \frac{C}{t}\Big(\frac{\sqrt{t}}{\rho (x)}\Big)^{\delta_0},\quad t\leq 8\rho (x)^2\mbox{ and }x\in \mathbb R^d, 
$$
and by, \cite[p. 98]{YYZ2}, for every $0<\delta<\delta_0$, there exists $C>0$ such that, for every $x,y\in B(x_0,r_0)$ and $t>8r_0^2$,
$$
|t^m\partial _t^{m+1}(W_t^{\mathcal L}(1)(x)-W_t^{\mathcal L}(1)(y))|\leq \frac{C}{t}\Big(\frac{r_0}{\sqrt{t}}\Big)^\delta.
$$
By using these estimates we can write, for every $x,y\in B(x_0,r_0)$ and $t\in [8r_0^2,8\rho (x_0)^2]$,
\begin{align*}
F_2(x,y,t)&\leq C|f_{B(x_0,r_0)}|\sum_{m=k-1}^k\Big[\Big|\int_{\mathbb R^d}t^m\partial _t^{m+1}W_t^{\mathcal L}(x,z)dz\Big|+\Big|\int_{\mathbb R^d}t^m\partial _t^{m+1}W_t^{\mathcal L}(y,z)])dz\Big|\Big]^{1/2}\\
&\quad \times |t^m\partial _t^{m+1}(W_t^{\mathcal L}(1)(x)-W_t^{\mathcal L}(1)(y))|^{1/2}\\ 
&\leq C|f_{B(x_0,r_0)}|\frac{1}{t}\Big(\frac{r_0}{\rho (x_0)}\Big)^{\delta/2},
\end{align*}
with $0<\delta<\delta_0$. By taking into account \eqref{promedioB} it follows that
\begin{align*}
\int_{8r_0^2}^{8\rho (x_0)^2}F_2(x,y,t)dt&\leq C|B(x_0,r_0)|^{\alpha -1}w(B(x_0,r_0))\|f\|_{{\rm BMO}_{\mathcal L,w}^\alpha (\mathbb R^d)}\Big(\frac{\rho (x_0)}{r_0}\Big)^{d(p+\alpha -1)+p\theta-\frac{\delta}{2}}\\
&\quad \times \int_{8r_0^2}^{8\rho (x_0)^2}\frac{dt}{t}\\
&\leq Cw(B(x_0,r_0))|B(x_0,r_0)|^{d(\alpha -1)}\|f\|_{{\rm BMO}_{\mathcal L,w}^\alpha (\mathbb R ^d)},\quad x,y\in B(x_0,r_0),
\end{align*}
provided that $\delta_0>\delta>2(d(p+\alpha -1)+p\theta)$. 

We conclude that
$$
G_{22}(f)\leq C|B(x_0,r_0)|^\alpha w(B(x_0,r_0))\|f\|_{{\rm BMO}_{\mathcal L,w}^\alpha (\mathbb R^d)}.
$$
We get 
\begin{equation}\label{*2}
G_2(f)\leq C|B(x_0,r_0)|^\alpha w(B(x_0,r_0))\|f\|_{{\rm BMO}_{\mathcal L,w}^\alpha (\mathbb R^d)}.
\end{equation}
Thus, by considering \eqref{*1} and \eqref{*2} the proof can be finished.
\section{Proof of Theorem \ref{Th1.2} for the oscillation operator $O(\{t^k\partial_t^kW_t^\mathcal{L}\}_{t>0},\{t_j\}_{j\in \mathbb{Z}})$}\label{S4}

In order to prove that the operator $O(\{t^k\partial_t^kW_t^\mathcal{L}\}_{t>0},\{t_j\}_{j\in \mathbb{Z}})$ is bounded from $L^p(\mathbb R^d,w)$ into itself for every $1<p<\infty$ and $w\in A_{p}^{\rho ,\infty}(\mathbb{R}^d)$, we can proceed as in the proof of \cite[Theorem 4]{TZ} and in \cite[Theorem 1.1]{BFHR1}. We sketch the main steps of the proof.

We firstly establish the result in the unweighted case, that is, we prove that $O(\{t^k\partial_t^kW_t^\mathcal{L}\}_{t>0},\{t_j\}_{j\in \mathbb{Z}})$ is bounded from $L^p(\mathbb R^d)$ into itself for every $1<p<\infty$. As far as we know a $L^p$-boundedness result for oscillation operators like the one established in \cite[Corollary 4.5]{LeMX} has not been proved. Since $\{W_t^{\mathcal L}\}_{t>0}$ is not Markovian, the $L^p$-boundedness of $O(\{W_t^\mathcal{L}\}_{t>0},\{t_j\}_{j\in \mathbb{Z}})$ can not be deduced from \cite[Theorem 3.3, (2)]{JR}.

Suppose that $F:(0,\infty )\longrightarrow \mathbb{C}$ is a derivable function. We have that
\begin{align}\label{E1}
O(\{F(t)\}_{t>0},\{t_j\}_{j\in \mathbb{Z}})&=\left(\sum_{i=-\infty }^{+\infty}\sup_{t_i\leq \varepsilon_i<\varepsilon _{i+1}\leq t_{i+1}}|F(\varepsilon _i)-F(\varepsilon _{i+1})|^2\right)^{1/2}\nonumber\\
&=\left(\sum_{i=-\infty }^{+\infty}\sup_{t_i\leq \varepsilon_i<\varepsilon _{i+1}\leq t_{i+1}}\Big|\int_{\varepsilon _i}^{\varepsilon _{i+1}}F'(t)dt\Big|^2\right)^{1/2}\nonumber\\
&\leq \left(\sum_{i=-\infty }^{+\infty}\sup_{t_i\leq \varepsilon_i<\varepsilon _{i+1}\leq t_{i+1}}\Big(\int_{\varepsilon _i}^{\varepsilon _{i+1}}|F'(t)|dt\Big)^2\right)^{1/2}\nonumber\\
&\leq C\sum_{i=-\infty }^{+\infty}\int_{t_i}^{t_{i+1}}|F'(t)|dt\leq C\int_0^\infty |F'(t)|dt.
\end{align}
This inequality plays an important role in our proof for the boundedness properties for \newline $O(\{t^k\partial _t^kW_t^\mathcal{L}\}_{t>0},\{t_j\}_{j\in \mathbb{Z}})$.

It is easy to see that if $F$ is a complex function defined in $(0,\infty )$ and $O(\{F(t)\}_{t>0},\{t_j\}_{j\in \mathbb{Z}})=0$, then $F$ is constant. The oscillation operator associated with $\{t_j\}_{j\in \mathbb Z}$ defines a seminorm in the space $\mathcal{F}$ of complex functions defined in $(0,\infty )$ such that $O(\{F(t)\}_{t>0},\{t_j\}_{j\in \mathbb{Z}})<\infty$.

We consider the quotient space $\mathcal{F}/\!\sim$ where $\sim$ is the binary relation defined as follows: if $F_1, F_2\in \mathcal F$ we say that $F_1\sim F_2$ when $F_1-F_2$ is constant. The oscillation defines a norm on $\mathcal{F}/\!\sim$ and $(\mathcal{F}/\!\sim , O(\cdot ,\{t_j\}_{j\in \mathbb{Z}}))$ is a Banach space. To see the oscillation as a norm allows us to simplify our arguments. We can also understand our oscillation operators $O(\{t^k\partial _t^kW_t^\mathcal{L}\}_{t>0},\{t_j\}_{j\in \mathbb{Z}})$ as Banach valued singular integral operators.

In order to prove that $O(\{t^k\partial _t^kW_t^\mathcal{L}\}_{t>0},\{t_j\}_{j\in \mathbb{Z}})$ defines a bounded operator from $L^p(\mathbb R^d)$ into itself, $1<p<\infty$, we exploit that the Schr\"odinger operator $\mathcal L$ is a nice (in some sense) perturbation of the Euclidean Laplacian. 

We now explain the procedure (see \cite{BFHR1}).

We split the region $\mathbb R^d\times \mathbb R^d$ in two parts:
$$
L=\{(x,y)\in \mathbb R^d\times \mathbb R^d: |x-y|<\rho (x)\}$$
and $G=(\mathbb R^d\times \mathbb R^d)\setminus L$. $L$ and $G$ mean local and global regions, respectively. To simplify we write $T_\mathcal{L}=O(\{t^k\partial _t^kW_t^\mathcal{L}\}_{t>0},\{t_j\}_{j\in \mathbb{Z}})$. 

We decompose the operator $T_\mathcal{L}$ in two parts: the local part 
$T_{\mathcal L,{\rm loc}}(f)(x)=T_\mathcal{L}(f\mathcal X_{B(x,\rho (x))})(x)$, $x\in\mathbb R^d$, and the global one, $T_{\mathcal{L},{\rm glob}}=T_\mathcal L-T_{\mathcal L,{\rm loc}}$.

We define the operators $T_{-\Delta}$, $T_{-\Delta ,{\rm loc}}$ and $T_{-\Delta ,{\rm glob}}$ as above by replacing the Schr\"odinger operator by the Euclidean Laplacian.

We decompose the operator $T_\mathcal{L}$ as follows:
$$
T_\mathcal L=(T_{\mathcal L,{\rm loc}}-T_{-\Delta ,{\rm loc}})+T_{-\Delta ,{\rm loc}}+T_{\mathcal L,{\rm glob}}.
$$
Our objective is to establish that the operators $T_{\mathcal L,{\rm loc}}-T_{-\Delta ,{\rm loc}}$, $T_{-\Delta,{\rm loc}}$ and $T_{\mathcal L,{\rm glob}}$ are bounded from $L^p(\mathbb R^ d)$ into itself, for every $1<p<\infty$.

We first study $T_{-\Delta ,{\rm loc}}$. We consider the function $\phi (z)=e^{-z}$, $z\in (0,\infty )$. The Euclidean heat kernel in $\mathbb{R}^d$ is defined by
$$
W_t(z)=\frac{1}{(4\pi t)^{d/2}}e^{-\frac{|z|^2}{4t}}=\frac{1}{(4\pi t)^{d/2}}\phi \Big(\frac{|z|^2}{4t}\Big),\quad z\in \mathbb R^d\mbox{ and }t>0.
$$
By using the Fa\`a di Bruno's formula we obtain
\begin{align*}
\partial _t^kW_t(z)&=\sum_{j=0}^kc_jt^{-\frac{d}{2}-(k-j)}\partial _t^j\phi \Big(\frac{|z|^2}{4t}\Big)\\
&=\sum_{j=0}^kc_jt^{-\frac{d}{2}-(k-j)}\sum_{m_1+2m_2+...+jm_j=j}d_{m_1,...,m_j}^j\phi \Big(\frac{|z|^2}{4t}\Big)\frac{|z|^{2(m_1+...+m_j)}}{t^{m_1+...+m_j+j}}\\
&=\sum_{j=0}^k\sum_{m_1+2m_2+...+jm_j=j}c_jd_{m_1,...,m_j}^j\frac{1}{t^{\frac{d}{2}+k}}\phi \Big(\frac{|z|^2}{4t}\Big)\Big(\frac{|z|^2}{t}\Big)^{m_1+...+m_j},\quad z\in \mathbb R^d\mbox{ and }t>0,
\end{align*}
where $c_j$, $d_{m_1,...,m_j}^j\in \mathbb R$, $j=0,...,k$ y $m_1+2m_2+...+jm_j=j$, $m_1,...,m_j\in \mathbb{N}$. Then,
\begin{equation}\label{E2}
t^k\partial _t^kW_t(z)=\frac{1}{t^{d/2}}\psi \Big(\frac{|z|}{\sqrt{t}}\Big),\quad z\in \mathbb R^d\mbox{ and }t>0,
\end{equation}
being
$$
\psi (u)=\sum_{j=0}^k\sum_{m_1+2m_2+...+jm_j=j}c_jd_{m_1,...,m_j}^j\phi (u^2)u^{2(m_1+...+m_j)},\quad u\in \mathbb{R}.
$$
Note that $\psi$ is in the Schwartz class $\mathcal S(\mathbb R)$. According to \cite[Lemma 2.4, (1)]{CJRW1} the operator $T_{-\Delta }$ is bounded from $L^p(\mathbb R^d)$ into itself, for every $1<p<\infty$.

According to \cite[Proposition 5]{DGMTZ} we choose a sequence $\{x_j\}_{j\in \mathbb N}\subset \mathbb R^d$ such that by defining $Q_j=B(x_j,\rho (x_j))$ the following two properties holds:

(i) $\bigcup_{j\in \mathbb N}Q_j=\mathbb R^d$;

(ii) For every $m\in \mathbb N$ there exist $\gamma,\beta \in \mathbb{N}$ such that, for every $j\in \mathbb N$, the set 
$$
\{\ell \in \mathbb N: 2^mQ_\ell\cap 2^mQ_j\neq \emptyset\}
$$
has at most $\gamma 2^{m\beta}$ elements.

Let $j\in \mathbb{N}$. If $x\in Q_j$ and $z\in B(x,\rho (x))$, then
$$
|z-x_j|\leq |z-x|+|x-x_j|\leq \rho (x)+\rho (x_j)\leq C_1\rho (x_j),
$$
because $\rho (x)\sim \rho (x_j)$. Here $C_1$ does not depend on $j$.

We consider, for every $t>0$, the operator 
$$
H_t^j(f)(x)=\mathcal{X}_{Q_j}(x)\int_{B(x_j,C_1\rho (x_j))\setminus B(x,\rho (x))}t^k\partial _t^kW_t(x-y)f(y)dy,\quad x\in \mathbb R^d.
$$
By using \eqref{E1} and \eqref{E2} we deduce that
\begin{align*}
O(\{t^k\partial _t^kW_t(z)\}_{t>0},\{t_j\}_{j\in \mathbb{Z}})&\leq C\int_0^\infty \Big|\partial _t\Big(\frac{1}{t^{d/2}}\psi \Big(\frac{|z|}{\sqrt{t}}\Big)\Big)\Big|dt\\
&\leq C\int_0^\infty \frac{1}{t^{\frac{d}{2}+1}}\left(\Big|\psi  \Big(\frac{|z|}{\sqrt{t}}\Big)\Big|+\frac{|z|}{\sqrt{t}}\Big|\psi ' \Big(\frac{|z|}{\sqrt{t}}\Big)\Big|\right)dt\leq \frac{C}{|z|^d},\quad z\in \mathbb R^d\setminus \{0\}.
\end{align*}
It follows that
\begin{align*}
O(\{H_t^j\}_{t>0},\{t_j\}_{j\in \mathbb{Z}})(f)(x)&\\
&\hspace{-3cm}\leq \mathcal{X}_{Q_j}(x)\int_{B(x_j,C_1\rho (x_j))\setminus B(x,\rho (x))}O(\{t^k\partial _t^kW_t(x-y)\}_{t>0},\{t_j\}_{j\in \mathbb{Z}})|f(y)|dy\\
&\hspace{-3cm}\leq C\mathcal{X}_{Q_j}(x)\int_{B(x_j,C_1\rho (x_j))\setminus B(x,\rho (x))}\frac{1}{|x-y|^d}|f(y)|dy\leq \frac{C}{\rho (x)^d}\mathcal{X}_{Q_j}(x)\int_{B(x_j,C_1\rho (x_j))}|f(y)|dy\\
&\hspace{-3cm}\leq \frac{C}{\rho (x_j)^d}\mathcal{X}_{Q_j}(x)\int_{B(x_j,C_1\rho (x_j))}|f(y)|dy\leq C\mathcal{X}_{Q_j}(x)\mathcal M_{\rm HL}(f)(x),\quad x\in \mathbb R^d.
\end{align*}
Here $\mathcal M_{\rm HL}$ represents the classical Hardy-Littlewood maximal operator. We have that
$$
T_{-\Delta }(\mathcal {X}_{B(x_j,C_1\rho (x_j))}f)=T_{-\Delta ,{\rm loc}}(f)(x)+T_{-\Delta }(\mathcal X_{B(x_j,C_1\rho (x_j))\setminus B(x,\rho (x))}f)(x),\quad x\in Q_j.
$$
Then,
$$
T_{-\Delta ,{\rm loc}}(f)(x)\leq T_{-\Delta }(\mathcal{X}_{B(x_j,C_1\rho (x_j))}f)+C\mathcal M_{\rm HL}(f)(x),\quad x\in Q_j.
$$
Let $1<p<\infty$. We can write
\begin{align*}
\int_{\mathbb R^d}|T_{-\Delta ,{\rm loc}}(f) (x)|^pdx&=\sum_{j\in \mathbb N}\int_{Q_j}|T_{-\Delta ,{\rm loc}}(f)(x)|^pdx\\
&\leq C\left(\sum_{j\in \mathbb N} \int_{Q_j}| T_{-\Delta }(\mathcal{X}_{B(x_j,C_1\rho (x_j))}f)(x)|^pdx+\int_{\mathbb R ^d}|\mathcal{M}_{\rm HL}(f)(x)|^pdx\right)\\
&\leq C\left(\sum_{j\in \mathbb N}\int_{B(x_j,C_1\rho (x_j))}|f(x)|^pdx+\int_{\mathbb R^d}|f(x)|^pdx\right)\leq C\int_{\mathbb R^d}|f(x)|^pdx.
\end{align*}

Thus we have proved that $T_{-\Delta,{\rm loc}}$ is bounded from $L^p (\mathbb R^d)$ into itself.

By using \eqref{E1} and Proposition \ref{Prop2.1} (a), proceeding as in \cite[p. 506]{BFHR1} we can deduce that
$$
T_{\mathcal L,{\rm glob}}(f)\leq C\mathcal M_{\rm HL}(f).
$$
Then, $T_{\mathcal L,{\rm glob}}$ is bounded from $L^p(\mathbb R^d)$ into itself.

The arguments in \cite[pp. 507-509]{BFHR1} by using again \eqref{E1} and now Proposition \ref{Prop2.1} (d), allow us to prove that
$$
|T_{\mathcal L,{\rm loc}}(f)-T_{-\Delta ,{\rm loc}}(f)|\leq C\mathcal{M}_{\rm HL}(f).
$$
We conclude that $T_{\mathcal L,{\rm loc}}-T_{-\Delta ,{\rm loc}}$ is bounded from $L^p(\mathbb R^d)$ into itself. By putting together all the above estimates we deduce that the oscillation operator $O(\{t^k\partial _t^kW_t^\mathcal{L}\}_{t>0},\{t_j\}_{j\in \mathbb{Z}})$  is bounded from $L^p(\mathbb R^d)$ into itself.

After proving that $O(\{t^k\partial _t^kW_t^\mathcal{L}\}_{t>0},\{t_j\}_{j\in \mathbb{Z}})$ is bounded from $L^p(\mathbb R^d)$ into itself for every $1<p<\infty $, by using the properties established in Proposition \ref{Prop2.1}, we can proceed as in \cite[pp. 605-609]{TZ} to establish that $O(\{t^k\partial _t^kW_t^\mathcal{L}\}_{t>0},\{t_j\}_{j\in \mathbb{Z}})$ is bounded from $L^p(\mathbb R^d,w)$ into itself, for every $1<p<\infty $ and $w\in A_p^{\rho ,\infty}(\mathbb R^d)$.

We are going to see that the oscillation operator $O(\{t^k\partial _t^kW_t^\mathcal{L}\}_{t>0},\{t_j\}_{j\in \mathbb{Z}})$ is bounded from ${\rm BMO}_{\mathcal L,w}^\alpha (\mathbb R^d)$ into ${\rm BLO}_{\mathcal L,w}^\alpha (\mathbb R^d)$.

By taking into account the weighted $L^p$-boundedness properties of $O(\{t^k\partial _t^kW_t^\mathcal{L}\}_{t>0},\{t_j\}_{j\in \mathbb{Z}})$ that we have just proved and Proposition \ref{Prop2.3}, we can prove by proceeding as in \cite[pp. 610-611 and Lemma 2.1]{TZ} that there exists $C>0$ for which, for every $f\in {\rm BMO}_{\mathcal L,w}^\alpha (\mathbb R^d)$,
\begin{align*}
\int_{B(x_0,r_0)}|O(\{t^k\partial _t^kW_t^\mathcal{L}\}_{t>0},\{t_j\}_{j\in \mathbb{Z}})(f)(x)|dx\\
&\hspace{-4cm}\leq C|B(x_0,r_0)|^\alpha w(B(x_0,r_0))\|f\|_{{\rm BMO}_{\mathcal L,w}^\alpha (\mathbb R^d)},\quad x_0\in \mathbb R^d\mbox{ and }r_0\geq \rho(x_0).
\end{align*}
Note that the last inequality implies that, for every $f\in {\rm BMO}_{\mathcal L,w}^\alpha (\mathbb R^d)$, we have that 
$$
O(\{t^k\partial _t^kW_t^\mathcal{L}\}_{t>0},\{t_j\}_{j\in \mathbb{Z}})(f)(x)<\infty,\quad \mbox{for almost all }x\in \mathbb R ^d.
$$

To finish the proof we need to see that there exists $C>0$ such that, for every $f\in {\rm BMO}_{\mathcal L,w}^\alpha (\mathbb R^d)$,
\begin{align*}
\int_{B(x_0,r_0)}\left(O(\{t^k\partial _t^kW_t^\mathcal{L}\}_{t>0},\{t_j\}_{j\in \mathbb{Z}})(f)(x)-{\rm ess}\hspace{-2mm}\inf_{\hspace{-5mm}y\in B(x_0,r_0)}O(\{t^k\partial _t^kW_t^\mathcal{L}\}_{t>0},\{t_j\}_{j\in \mathbb{Z}})(f)(y)\right)dx&\\
&\hspace{-12cm}\leq C\|B(x_0,r_0)|^\alpha w(B(x_0,r_0))|f\|_{{\rm BMO}_{\mathcal L,w}^\alpha (\mathbb R^d)},\quad x_0\in \mathbb R^d\mbox{ and } 0<r_0<\rho(x_0).
\end{align*}

Let $f\in {\rm BMO}_{\mathcal L,w}^\alpha (\mathbb R^d)$, $x_0\in \mathbb R^d$ and $0<r_0<\rho (x_0)$. We choose $i_0\in \mathbb Z$ such that $t_{i_0}<8r_0^2\leq t_{i_0+1}$.

We define the following sets
\begin{align*}
D_1=\big\{y\in B(x_0,r_0):& \sup_{t_{i_0}\leq \varepsilon _{i_0}<\varepsilon_{i_0+1}\leq t_{i_0+1}}|t^k\partial _t^kW_t^\mathcal L(f)(y)_{|t=\varepsilon _{i_0}}-t^k\partial _t^kW_t^\mathcal L(f)(y)_{|t=\varepsilon _{i_0+1}}|\\
&=\sup_{t_{i_0}\leq \varepsilon _{i_0}<\varepsilon_{i_0+1}\leq 8r_0^2}|t^k\partial _t^kW_t^\mathcal L(f)(y)_{|t=\varepsilon _{i_0}}-t^k\partial _t^kW_t^\mathcal L(f)(y)_{|t=\varepsilon _{i_0+1}}|\big\},
\end{align*}
\begin{align*}
D_2=\big\{y\in B(x_0,r_0):& \sup_{t_{i_0}\leq \varepsilon _{i_0}<\varepsilon_{i_0+1}\leq t_{i_0+1}}|t^k\partial _t^kW_t^\mathcal L(f)(y)_{|t=\varepsilon _{i_0}}-t^k\partial _t^kW_t^\mathcal L(f)(y)_{|t=\varepsilon _{i_0+1}}|\\
&=\sup_{8r_0^2\leq \varepsilon _{i_0}<\varepsilon_{i_0+1}\leq t_{i_0+1}}|t^k\partial _t^kW_t^\mathcal L(f)(y)_{|t=\varepsilon _{i_0}}-t^k\partial _t^kW_t^\mathcal L(f)(y)_{|t=\varepsilon _{i_0+1}}|\big\}
\end{align*}
and
\begin{align*}
D_3=\big\{y\in B(x_0,r_0):& \sup_{t_{i_0}\leq \varepsilon _{i_0}<\varepsilon_{i_0+1}\leq t_{i_0+1}}|t^k\partial _t^kW_t^\mathcal L(f)(y)_{|t=\varepsilon _{i_0}}-t^k\partial _t^kW_t^\mathcal L(f)(y)_{|t=\varepsilon _{i_0+1}}|\\
&=\sup_{t_{i_0}\leq \varepsilon _{i_0}<8r_0^2<\varepsilon_{i_0+1}\leq 8r_0^2}|t^k\partial _t^kW_t^\mathcal L(f)(y)_{|t=\varepsilon _{i_0}}-t^k\partial _t^kW_t^\mathcal L(f)(y)_{|t=\varepsilon _{i_0+1}}|\big\}
\end{align*}

We consider the following decomposition
\begin{align*}
\int_{B(x_0,r_0)}\left(O(\{t^k\partial _t^kW_t^\mathcal{L}\}_{t>0},\{t_j\}_{j\in \mathbb{Z}})(f)(x)-{\rm ess}\hspace{-2mm}\inf_{\hspace{-5mm}y\in B(x_0,r_0)}O(\{t^k\partial _t^kW_t^\mathcal{L}\}_{t>0},\{t_j\}_{j\in \mathbb{Z}})(f)(y)\right)dx&\\
&\hspace{-12cm}=\sum_{i=1}^3H_i, 
\end{align*}
where for every $i=1,2,3$,
$$
H_i=\int_{D_i}\left(O(\{t^k\partial _t^kW_t^\mathcal{L}\}_{t>0},\{t_j\}_{j\in \mathbb{Z}})(f)(x)-{\rm ess}\hspace{-2mm}\inf_{\hspace{-5mm}y\in B(x_0,r_0)}O(\{t^k\partial _t^kW_t^\mathcal{L}\}_{t>0},\{t_j\}_{j\in \mathbb{Z}})(f)(y)\right)dx.
$$
We have that
\begin{align*}
O(\{t^k\partial _t^kW_t^\mathcal{L}\}_{t>0},\{t_j\}_{j\in \mathbb{Z}})(f)(y)\\
&\hspace{-4cm}\geq \left(\sum_{i=i_0+1}^{+\infty} \sup_{t_i\leq \varepsilon _i<\varepsilon _{i+1}\leq t_{i+1}}|t^k\partial _t^kW_t^\mathcal L (f)(y)_{|t=\varepsilon _i}-t^k\partial _t^kW_t^\mathcal L (f)(y)_{|t=\varepsilon _{i+1}}|^2\right)^{1/2},\quad y\in B(x_0,r_0).
\end{align*}
Then,
\begin{align*}
H_1&\leq \int_{D_1} \Big(\sum_{i=-\infty }^{i_0}\sup_{\substack{t_{i}\leq \varepsilon _{i}<\varepsilon_{i+1}\leq t_{i+1}\\\varepsilon _{i+1}\leq 8r_0^2}}|t^k\partial _t^kW_t^\mathcal L(f)(x)_{|t=\varepsilon _{i}}-t^k\partial _t^kW_t^\mathcal L(f)(x)_{|t=\varepsilon _{i+1}}|^2\Big)^{1/2}dx\\
&\quad +\int_{D_1}\Big[\Big(\sum_{i=i_0+1 }^{+\infty }\sup_{t_{i}\leq \varepsilon _{i}<\varepsilon_{i+1}\leq t_{i+1}}|t^k\partial _t^kW_t^\mathcal L(f)(y)_{|t=\varepsilon _{i}}-t^k k\partial _t^kW_t^\mathcal L (f)(y)_{|t=\varepsilon _{i+1}}|^2\Big)^{1/2}\\
&\quad -\Big(\;\;\;{\rm ess}\hspace{-2mm}\inf_{\hspace{-5mm}y\in B(x_0,r_0)}\Big(\sum_{i=i_0+1 }^{+\infty }\sup_{t_{i}\leq \varepsilon _{i}<\varepsilon_{i+1}\leq t_{i+1}}|t^k\partial _t^kW_t^\mathcal L(f)(y)_{|t=\varepsilon _{i}}-t^k k\partial _t^kW_t^\mathcal L (f)(y)_{|t=\varepsilon _{i+1}}|^2\Big)^{1/2}\Big]dx.\\
&\leq \int_{B(x_0,r_0)}\Big(\sum_{i=-\infty }^{i_0}\sup_{\substack{t_{i}\leq \varepsilon _{i}<\varepsilon_{i+1}\leq t_{i+1}\\\varepsilon _{i+1}\leq 8r_0^2}}|t^k\partial _t^kW_t^\mathcal L(f)(x)_{|t=\varepsilon _{i}}-t^k\partial _t^kW_t^\mathcal L (f)(x)_{|t=\varepsilon _{i+1}}|^2\Big)^{1/2}dx\\
&\quad +|B(x_0,r_0)|\;\;{\rm ess}\hspace{-2mm}\sup_{\hspace{-5mm}z,y\in B(x_0,r_0)}\Big(\sum_{i=i_0+1 }^{+\infty }\sup_{t_{i}\leq \varepsilon _{i}<\varepsilon_{i+1}\leq t_{i+1}}|t^k\partial _t^kW_t^\mathcal L(f)(z)_{|t=\varepsilon _{i}}-t^k \partial _t^kW_t^\mathcal L (f)(z)_{|t=\varepsilon _{i+1}}\Big)\\
&\quad -\Big(t^k\partial _t^kW_t^\mathcal L(f)(y)_{|t=\varepsilon_i}-t^k\partial _t^kW_t^\mathcal L (f)(y)_{|t=\varepsilon _{i+1}})|^2\Big)^{1/2}.
\end{align*}
On the other hand, we can write

\begin{align*}
O(\{t^k\partial _t^kW_t^\mathcal{L}\}_{t>0},\{t_j\}_{j\in \mathbb{Z}})(f)(y)\\
&\hspace{-4cm}\geq \left(\sum_{i=i_0}^{+\infty} \sup_{\substack{t_i\leq \varepsilon _i<\varepsilon _{i+1}\leq t_{i+1}\\\varepsilon_i\geq 8r_0^2}}|(t^k\partial _t^kW_t^\mathcal L (f)(y)_{|t=\varepsilon _i}-t^k\partial _t^kW_t^\mathcal L (f)(y)_{|t=\varepsilon _{i+1}}|^2\right)^{1/2},\quad y\in B(x_0,r_0).
\end{align*}

It follows that
\begin{align*}
H_2&\leq \int_{B(x_0,r_0)}\Big(\sum_{i=-\infty }^{i_0-1}\sup_{t_{i}\leq \varepsilon _{i}<\varepsilon_{i+1}\leq t_{i+1}}|t^k\partial _t^kW_t^\mathcal L(f)(y)_{|t=\varepsilon _{i}}-t^k\partial _t^kW_t^\mathcal L (f)(y)_{|t=\varepsilon _{i+1}}|^2\Big)^{1/2}dy\\
&\quad +|B(x_0,r_0)|\;\;{\rm ess}\hspace{-2mm}\sup_{\hspace{-5mm}z,y\in B(x_0,r_0)}\Big(\sum_{i=i_0 }^{+\infty }\sup_{\substack{t_{i}\leq \varepsilon _{i}<\varepsilon_{i+1}\leq t_{i+1}\\\varepsilon_i\geq 8r_0^2}}|(t^k\partial _t^kW_t^\mathcal L(f)(z)_{|t=\varepsilon _{i}}-t^k \partial _t^kW_t^\mathcal L (f)(z)_{|t=\varepsilon _{i+1}})\\
&\quad -(t^k\partial _t^kW_t^\mathcal L(f)(y)_{|t=\varepsilon_i}-t^k\partial _t^kW_t^\mathcal L (f)(y)_{|t=\varepsilon _{i+1}})|^2\Big)^{1/2}.
\end{align*}
Finally, in order to estimate $H_3$, we observe that
\begin{align*}
O(\{t^k\partial _t^kW_t^\mathcal{L}\}_{t>0},\{t_j\}_{j\in \mathbb{Z}})(f)(y)&\\
&\hspace{-4cm}\leq \left(\sum_{i=-\infty }^{i_0-1}\sup_{t_{i}\leq \varepsilon _{i}<\varepsilon_{i+1}\leq t_{i+1}}|t^k\partial _t^kW_t^\mathcal L(f)(y)_{|t=\varepsilon _{i}}-t^k\partial _t^kW_t^\mathcal L (f)(y)_{|t=\varepsilon _{i+1}}|^2\right. \\
&\hspace{-4cm}\quad +\Big(\sup_{t_{i_0}\leq \varepsilon _{i_0}\leq 8r_0^2}|t^k\partial _t^kW_t^\mathcal L(f)(y)_{|t=t_{i_0}}-t^k\partial _t^kW_t^\mathcal L (f)(y)_{|t=8r_0^2}|\\
&\hspace{-4cm}\quad +\sup_{8r_0^2\leq \varepsilon _{i_0+1}\leq t_{i_0+1}}|t^k\partial _t^kW_t^\mathcal L(f)(y)_{|t=8r_0^2}-t^k\partial _t^kW_t^\mathcal L (f)(y)_{|t=t_{i_0+1}}|\Big)^2\\
&\hspace{-4cm}\quad \left.+\sum_{i=i_0+1}^{+\infty }\sup_{t_{i}\leq \varepsilon _{i}<\varepsilon_{i+1}\leq t_{i+1}}|t^k\partial _t^kW_t^\mathcal L(f)(y)_{|t=\varepsilon _{i}}-t^k\partial _t^kW_t^\mathcal L (f)(y)_{|t=\varepsilon _{i+1}}|^2\right)^{1/2}\\
&\hspace{-4cm}\leq \left(\sum_{i=-\infty }^{i_0-1}\sup_{t_{i}\leq \varepsilon _{i}<\varepsilon_{i+1}\leq t_{i+1}}|t^k\partial _t^kW_t^\mathcal L(f)(y)_{|t=\varepsilon _{i}}-t^k\partial _t^kW_t^\mathcal L (f)(y)_{|t=\varepsilon _{i+1}}|^2\right.\\
&\hspace{-4cm}\quad \left.+\sup_{t_{i_0}\leq \varepsilon _{i_0}\leq 8r_0^2}|t^k\partial _t^kW_t^\mathcal L(f)(y)_{|t=\varepsilon_i}-t^k\partial _t^kW_t^\mathcal L (f)(y)_{|t=8r_0^2}|^2\right)^{1/2}\\
&\hspace{-4cm}\quad +\left(\sup_{8r_0^2\leq \varepsilon _{i_0+1}\leq t_{i_0+1}}|t^k\partial _t^kW_t^\mathcal L(f)(y)_{|t=8r_0^2}-t^k\partial _t^kW_t^\mathcal L (f)(y)_{|t=\varepsilon_{i_0+1}}|^2\right.\\
&\hspace{-4cm}\quad \left.+\sum_{i=i_0+1}^{+\infty }\sup_{t_{i}\leq \varepsilon _{i}<\varepsilon_{i+1}\leq t_{i+1}}|t^k\partial _t^kW_t^\mathcal L(f)(y)_{|t=\varepsilon _{i}}-t^k\partial _t^kW_t^\mathcal L (f)(y)_{|t=\varepsilon _{i+1}}|^2\right)^{1/2}\\
&\hspace{-4cm}\leq \left(\sum_{i=-\infty }^{i_0-1}\sup_{t_{i}\leq \varepsilon _{i}<\varepsilon_{i+1}\leq t_{i+1}}|t^k\partial _t^kW_t^\mathcal L(f)(y)_{|t=\varepsilon _{i}}-t^k\partial _t^kW_t^\mathcal L (f)(y)_{|t=\varepsilon _{i+1}}|^2\right.\\
&\hspace{-4cm}\quad \left.+\sup_{t_{i_0}\leq \varepsilon _{i_0}<\varepsilon_{i_0+1}\leq 8r_0^2}|t^k\partial _t^kW_t^\mathcal L(f)(y)_{|t=\varepsilon_{i_0}}-t^k\partial _t^kW_t^\mathcal L (f)(y)_{|t=\varepsilon_{i_0+1}}|^2\right)^{1/2}\\
&\hspace{-4cm}\quad +\left(\sup_{8r_0^2\leq \varepsilon _{i_0+1}\leq t_{i_0+1}}|t^k\partial _t^kW_t^\mathcal L(f)(y)_{|t=8r_0^2}-t^k\partial _t^kW_t^\mathcal L (f)(y)_{|t=\varepsilon_{i_0+1}}|^2\right.\\
&\hspace{-4cm}\quad \left.+\sum_{i=i_0+1}^{+\infty }\sup_{t_{i}\leq \varepsilon _{i}<\varepsilon_{i+1}\leq t_{i+1}}|t^k\partial _t^kW_t^\mathcal L(f)(y)_{|t=\varepsilon _{i}}-t^k\partial _t^kW_t^\mathcal L (f)(y)_{|t=\varepsilon _{i+1}}|^2\right)^{1/2},\quad y\in D_3.
\end{align*}
Thus, we deduce that
\begin{align*}
O(\{t^k\partial _t^kW_t^\mathcal{L}\}_{t>0},\{t_j\}_{j\in \mathbb{Z}})(f)(y)&\\
&\hspace{-4cm}\leq \Big(\sum_{i=-\infty }^{i_0}\sup_{\substack{t_{i}\leq \varepsilon _{i}<\varepsilon_{i+1}\leq t_{i+1}\\\varepsilon_{i+1}\leq 8r_0^2}}|t^k\partial _t^kW_t^\mathcal L(f)(y)_{|t=\varepsilon _{i}}-t^k\partial _t^kW_t^\mathcal L (f)(y)_{|t=\varepsilon _{i+1}}|^2\Big)^{1/2}\\
&\hspace{-4cm}\quad +\left(\sup_{8r_0^2\leq \varepsilon _{i_0+1}\leq t_{i_0+1}}|t^k\partial _t^kW_t^\mathcal L(f)(y)_{|t=8r_0^2}-t^k\partial _t^kW_t^\mathcal L (f)(y)_{|t=\varepsilon_{i+1}}|^2\right.\\
&\hspace{-4cm}\quad \left.+\sum_{i=i_0+1}^{+\infty }\sup_{t_{i}\leq \varepsilon _{i}<\varepsilon_{i+1}\leq t_{i+1}}|t^k\partial _t^kW_t^\mathcal L(f)(y)_{|t=\varepsilon _{i}}-t^k\partial _t^kW_t^\mathcal L (f)(y)_{|t=\varepsilon _{i+1}}|^2\right)^{1/2},\quad y\in D_3.
\end{align*}
On the other hand have that
\begin{align*}
O(\{t^k\partial _t^kW_t^\mathcal{L}\}_{t>0},\{t_j\}_{j\in \mathbb{Z}})(f)(y)\\
&\hspace{-4cm}\geq \left(\sum_{i=i_0+1}^\infty \sup_{t_i\leq \varepsilon _i<\varepsilon _{i+1}\leq t_{i+1}}|(t^k\partial _t^kW_t^\mathcal L (f)(y)_{|t=\varepsilon _i}-t^k\partial _t^kW_t^\mathcal L (f)(y)_{|t=\varepsilon _{i+1}}|^2\right.\\
&\hspace{-4cm}\left.\quad +\sup_{t_{i_0}\leq \varepsilon _{i_0}\leq 8r_0^2<\varepsilon_{i_0+1}\leq t_{i_0+1}}|t^k\partial _t^kW_t^\mathcal L(f)(y)_{|t=\varepsilon_{i_0}}-t^k\partial _t^kW_t^\mathcal L (f)(y)_{|t=\varepsilon_{i_0+1}}|^2\right)^{1/2}\\
&\hspace{-4cm}\geq \left(\sum_{i=i_0+1}^\infty \sup_{t_i\leq \varepsilon _i<\varepsilon _{i+1}\leq t_{i+1}}|(t^k\partial _t^kW_t^\mathcal L (f)(y)_{|t=\varepsilon _i}-t^k\partial _t^kW_t^\mathcal L (f)(y)_{|t=\varepsilon _{i+1}}|^2\right.\\
&\hspace{-4cm}\left.\quad +\sup_{8r_0^2<\varepsilon_{i_0+1}\leq t_{i_0+1}}|t^k\partial _t^kW_t^\mathcal L(f)(y)_{|t=8r_0^2}-t^k\partial _t^kW_t^\mathcal L (f)(y)_{|t=\varepsilon_{i_0+1}}|^2\right)^{1/2},\quad y\in B(x_0,r_0).
\end{align*}
It follows that
\begin{align*}
H_3&\leq \int_{D_3}\Big(\sum_{i=-\infty }^{i_0}\sup_{\substack{t_{i}\leq \varepsilon _{i}<\varepsilon_{i+1}\leq t_{i+1}\\\varepsilon _{i+1}\leq 8r_0^2}}|t^k\partial _t^kW_t^\mathcal L(f)(x)_{|t=\varepsilon _{i}}-t^k\partial _t^kW_t^\mathcal L (f)(x)_{|t=\varepsilon _{i+1}}|^2\Big)^{1/2}dx\\
&\quad +|B(x_0,r_0)|\;\;{\rm ess}\hspace{-2mm}\sup_{\hspace{-5mm}z,y\in B(x_0,r_0)}\Big(\sum_{i=i_0 +1}^{+\infty }\sup_{t_{i}\leq \varepsilon _{i}<\varepsilon_{i+1}\leq t_{i+1}}|(t^k\partial _t^kW_t^\mathcal L(f)(z)_{|t=\varepsilon _{i}}-t^k \partial _t^kW_t^\mathcal L (f)(z)_{|t=\varepsilon _{i+1}})\\
&\quad -(t^k\partial _t^kW_t^\mathcal L(f)(y)_{|t=\varepsilon_i}-t^k\partial _t^kW_t^\mathcal L (f)(y)_{|t=\varepsilon _{i+1}})|^2\\
&\quad +\sup_{8r_0^2<\varepsilon_{i_0+1}\leq t_{i_0+1}}|(t^k\partial _t^kW_t^\mathcal L(f)(z)_{|t=8r_0^2}-t^k\partial _t^kW_t^\mathcal L (f)(z)_{|t=\varepsilon_{i_0+1}})\\
&\quad -(t^k\partial _t^kW_t^\mathcal L(f)(y)_{|t=8r_0^2}-t^k\partial _t^kW_t^\mathcal L (f)(y)_{|t=\varepsilon_{i_0+1}})|^2\Big)^{1/2}\\
&\leq \int_{D_3}\Big(\sum_{i=-\infty }^{i_0}\sup_{\substack{t_{i}\leq \varepsilon _{i}<\varepsilon_{i+1}\leq t_{i+1}\\\varepsilon _{i+1}\leq 8r_0^2}}|t^k\partial _t^kW_t^\mathcal L(f)(x)_{|t=\varepsilon _{i}}-t^k\partial _t^kW_t^\mathcal L (f)(x)_{|t=\varepsilon _{i+1}}|^2\Big)^{1/2}dx\\
&\quad +|B(x_0,r_0)|\;\;{\rm ess}\hspace{-2mm}\sup_{\hspace{-5mm}z,y\in B(x_0,r_0)}\Big(\sum_{i=i_0 +1}^{+\infty }\sup_{t_{i}\leq \varepsilon _{i}<\varepsilon_{i+1}\leq t_{i+1}}|(t^k\partial _t^kW_t^\mathcal L(f)(z)_{|t=\varepsilon _{i}}-t^k \partial _t^kW_t^\mathcal L (f)(z)_{|t=\varepsilon _{i+1}})\\
&\quad -(t^k\partial _t^kW_t^\mathcal L(f)(y)_{|t=\varepsilon_i}-t^k\partial _t^kW_t^\mathcal L (f)(y)_{|t=\varepsilon _{i+1}})|^2\\
&\quad +\sup_{8r_0^2\leq \varepsilon _{i_0}<\varepsilon_{i_0+1}\leq t_{i_0+1}}|(t^k\partial _t^kW_t^\mathcal L(f)(z)_{|t=\varepsilon_{i_0}}-t^k\partial _t^kW_t^\mathcal L (f)(z)_{|t=\varepsilon_{i_0+1}})\\
&\quad -(t^k\partial _t^kW_t^\mathcal L(f)(y)_{|t=8r_0^2}-t^k\partial _t^kW_t^\mathcal L (f)(y)_{|t=\varepsilon_{i_0+1}})|^2\Big)^{1/2}.
\end{align*}
Thus,
\begin{align*}
H_3&\leq \int_{B(x_0,r_0)}\Big(\sum_{i=-\infty }^{i_0}\sup_{\substack{t_{i}\leq \varepsilon _{i}<\varepsilon_{i+1}\leq t_{i+1}\\\varepsilon _{i+1}\leq 8r_0^2}}|t^k\partial _t^kW_t^\mathcal L(f)(x)_{|t=\varepsilon _{i}}-t^k\partial _t^kW_t^\mathcal L (f)(x)_{|t=\varepsilon _{i+1}}|^2\Big)^{1/2}dx\\
&\quad +|B(x_0,r_0)|\;\;{\rm ess}\hspace{-2mm}\sup_{\hspace{-5mm},z,y\in B(x_0,r_0)}\Big(\sum_{i=i_0}^{+\infty }\sup_{\substack{t_{i}\leq \varepsilon _{i}<\varepsilon_{i+1}\leq t_{i+1}\\\varepsilon _{i}\geq 8r_0^2}}|(t^k\partial _t^kW_t^\mathcal L(f)(z)_{|t=\varepsilon _{i}}-t^k \partial _t^kW_t^\mathcal L (f)(z)_{|t=\varepsilon _{i+1}})\\
&\quad -(t^k\partial _t^kW_t^\mathcal L(f)(y)_{|t=\varepsilon_i}-t^k\partial _t^kW_t^\mathcal L (f)(y)_{|t=\varepsilon _{i+1}})|^2\Big)^{1/2}.
\end{align*}
In order to get our objective it is sufficient to prove that
\begin{align}\label{E3}
\int_{B(x_0,r_0)}\Big(\sum_{i=-\infty }^{i_0}\sup_{\substack{t_{i}\leq \varepsilon _{i}<\varepsilon_{i+1}\leq t_{i+1}\\\varepsilon _{i+1}\leq 8r_0^2}}|t^k\partial _t^kW_t^\mathcal L(f)(x)_{|t=\varepsilon _{i}}-t^k\partial _t^kW_t^\mathcal L (f)(x)_{|t=\varepsilon _{i+1}}|^2\Big)^{1/2}dx\nonumber\\
&\hspace{-6cm}\leq C|B(x_0,r_0)|^\alpha w(B(x_0,r_0))\|f\|_{{\rm BMO}_{\mathcal L,w}^\alpha (\mathbb R^d)}, 
\end{align}
and 
\begin{align}\label{E4}
{\rm ess}\hspace{-2mm}\sup_{\hspace{-5mm}x,y\in B(x_0,r_0)}\Big(\sum_{i=i_0}^{+\infty }\sup_{\substack{t_{i}\leq \varepsilon _{i}<\varepsilon_{i+1}\leq t_{i+1}\\\varepsilon _{i}\geq 8r_0^2}}|(t^k\partial _t^kW_t^\mathcal L(f)(x)_{|t=\varepsilon _{i}}-t^k \partial _t^kW_t^\mathcal L (f)(x)_{|t=\varepsilon _{i+1}})\nonumber\\
&\hspace{-10cm}\quad -(t^k\partial _t^kW_t^\mathcal L(f)(y)_{|t=\varepsilon_i}-t^k\partial _t^kW_t^\mathcal L (f)(y)_{|t=\varepsilon _{i+1}})|^2\Big)^{1/2}\nonumber\\
&\hspace{-10cm}\leq C|B(x_0,r_0)|^{\alpha -1}w(B(x_0,r_0)\|f\|_{{\rm BMO}_{\mathcal L,w}^\alpha (\mathbb R^d)}.
\end{align}

By using \eqref{E1} we can get \eqref{E3} and \eqref{E4} by proceeding as in the proof of \eqref{*1} and \eqref{*2}, respectively.

\section{Proof of Theorem \ref{Th1.2} for the operator $SV(\{t^k\partial_t^kW_t^\mathcal{L}\}_{t>0})$}\label{S5}

In order to prove Theorem \ref{Th1.2} for the short variation operator $SV(\{t^k\partial_t^kW_t^\mathcal{L}\}_{t>0})$ we can proceed as in the previous section for the oscillation operator $O(\{t^k\partial_t^kW_t^\mathcal{L}\}_{t>0},\{t_j\}_{j\in \mathbb{Z}})$.

Note firstly that if $F$ is a derivable function in $(0,\infty )$ we have that
\begin{align}\label{3.1}
SV(\{F(t)\}_{t>0})\leq C\int_0^\infty |F'(t)|dt.
\end{align}
By taking into account \eqref{E2} and according to \cite[Lemma 2.4, (3)]{CJRW1} it follows that the operator $SV(\{t^k\partial_t^kW_t\}_{t>0})$ is bounded from $L^p(\mathbb R^d)$ into itself, for every $1<p<\infty$.

We now define the local and global operators as in Section \ref{S4}. We have that
\begin{align*}
SV(\{t^k\partial_t^kW_t^\mathcal{L}\}_{t>0})(f)&\leq SV_{{\rm loc}}(\{t^k\partial_t^k(W_t^\mathcal{L}-W_t)\}_{t>0})(f)+SV_{{\rm loc}}(\{t^k\partial_t^kW_t\}_{t>0})(f)\\
&\quad +SV_{{\rm glob}}(\{t^k\partial_t^kW_t^\mathcal{L}\}_{t>0})(f).
\end{align*}
Then, by proceeding as in the study of the oscillation operator in the previous section we can see that $SV(\{t^k\partial_t^kW_t^\mathcal{L}\}_{t>0})$ is bounded from $L^p(\mathbb R^d)$ into itself, for every $1<p<\infty$. By using \eqref{3.1} the arguments in \cite[pp. 605-609]{TZ} allow us to see that the operator $SV\{t^k\partial_t^kW_t^\mathcal{L}\}_{t>0})$ is bounded from $L^p(\mathbb R^d,w)$ into itself, for every $1<p<\infty$ and $w\in A_p^{\rho , \infty}(\mathbb{R}^d)$.

Let now $x_0\in \mathbb R^d$ and $r_0>0$ such that $r_0<\rho (x_0)$. We choose $k_0\in \mathbb{N}$ such that $2^{-k_0}<8r_0^2\leq 2^{-k_0+1}$. We have that
\begin{align*}
V_{k_0}(\{t^k\partial_t^kW_t^\mathcal{L}\}_{t>0})(f)(x)&\\
&\hspace{-2cm}=\sup_{\substack{2^{-k_0}<t_n<...<t_1\leq 2^{-k_0+1}\\n\in \mathbb{N}}}\Big(\sum_{j=1}^{n-1}|t^k\partial_t^kW_t^\mathcal{L}(f)(x)_{|t=t_j}-t^k\partial_t^kW_t^\mathcal{L}(f)(x)_{|t=t_{j+1}}|^2\Big)^{1/2}\\
&\hspace{-2cm}\leq \sup_{\substack{2^{-k_0}<s_\ell<...<s_1\leq 8r_0^2\\ \ell \in \mathbb{N}}}\Big(\sum_{j=1}^{\ell-1}|t^k\partial_t^kW_t^\mathcal{L}(f)(x)_{|t=s_j}-t^k\partial_t^kW_t^\mathcal{L}(f)(x)_{|t=s_{j+1}}|^2\Big)^{1/2}\\
&\hspace{-2cm}\quad +\sup_{\substack{8r_0^2\leq s_\ell<...<s_1\leq 2^{-k_0+1}\\ \ell \in \mathbb{N}}}\Big(\sum_{j=1}^{\ell-1}|t^k\partial_t^kW_t^\mathcal{L}(f)(x)_{|t=s_j}-t^k\partial_t^kW_t^\mathcal{L}(f)(x)_{|t=s_{j+1}}|^2\Big)^{1/2}\\
&\hspace{-2cm}=: V_{k_0,-}(\{t^k\partial_t^kW_t^\mathcal{L}\}_{t>0})(f)(x)+V_{k_0,+}(\{t^k\partial_t^kW_t^\mathcal{L}\}_{t>0})(f)(x),\quad x\in \mathbb{R}^d,
\end{align*}
and 
$$
V_{k_0}(\{t^k\partial_t^kW_t^\mathcal{L}\}_{t>0})(f)(x)\geq V_{k_0,+}(\{t^k\partial_t^kW_t^\mathcal{L}\}_{t>0})(f)(x),\quad x\in \mathbb{R}^d.
$$
It follows that
\begin{align*}
\int_{B(x_0,r_0)}(SV(\{t^k\partial_t^kW_t^\mathcal{L}\}_{t>0})(f)(x)-{\rm ess}\hspace{-2mm}\inf_{\hspace{-5mm}y\in B(x_0,r_0)}SV(\{t^k\partial_t^kW_t^\mathcal{L}\}_{t>0})(f)(y))dx\\
&\hspace{-11.3cm}\leq \int_{B(x_0,r_0)}\left(\sum_{j=k_0+1}^\infty (V_j(\{t^k\partial_t^kW_t^\mathcal{L}\}_{t>0})(f)(x))^2+(V_{k_0,-}(\{t^k\partial_t^kW_t^\mathcal{L}\}_{t>0})(f)(x))^2\right)^{1/2}dx\\
&\hspace{-11.3cm}\quad +|B(x_0,r_0)|\;\;\;{\rm ess}\hspace{-2mm}\sup_{\hspace{-4mm}z,y\in B(x_0,r_0)}
\Big|\Big((V_{k_0,+}(\{t^k\partial_t^kW_t^\mathcal{L}\}_{t>0})(f)(z))^2+\sum_{j=-\infty}^{k_0-1}(V_j(\{t^k\partial_t^kW_t^\mathcal{L}\}_{t>0})(f)(z))^2\Big)^{1/2}\\
&\hspace{-11.3cm}\quad -\Big((V_{k_0,+}(\{t^k\partial_t^kW_t^\mathcal{L}\}_{t>0})(f)(y))^2+\sum_{j=-\infty}^{k_0-1}(V_j(\{t^k\partial_t^kW_t^\mathcal{L}\}_{t>0})(f)(y))^2\Big)^{1/2}\Big|.
\end{align*}
We have all the ingredients to finish the proof by proceeding as in Section \ref{S3}.

\section{Proof of Theorem \ref{Th1.1}}\label{S6}

We firstly establish that the maximal operator $W_*^{\mathcal{L},k}$ is bounded from $L^p(\mathbb R^d,w)$ into itself. In order to do this, it is sufficient to proceed as in the proof of \cite[Theorem 2]{BHS2} by using Proposition \ref{Prop2.1}, (a).

Let $f\in {\rm BMO}_{\mathcal L,w}^\alpha (\mathbb R ^d)$ and $x_0\in \mathbb R^d$. Taking $r_0=\rho (x_0)$, we decompose $f$ as follows $f=f\mathcal X_{B(x_0,2r_0)}+f\mathcal X_{B(x_0,2r_0)^c}=:f_1+f_2$. Since $w\in A_p^{\rho ,\theta}(\mathbb{R}^d)$ we have that $w^{-1/(p-1)}\in A_{p'}^{\rho ,\theta}(\mathbb{R}^d)$ (Proposition \ref{Prop2.3}, (a)). H\"older's inequality and Propositions \ref{Prop2.3}, (c), and \ref{Prop2.4} lead to
\begin{align*}
  \int_{B(x_0,r_0)}|W_*^{\mathcal{L},k}(f_1)(x)|dx&\leq w(B(x_0,r_0))^{1/p}\left(\int_{B(x_0,r_0)}|W_*^{\mathcal{L},k}(f_1)(x)|^{p'}w^{-\frac{1}{p-1}}(x)dx\right)^{1/p'}\\
    &\leq Cw(B(x_0,r_0))^{1/p}\left(\int_{B(x_0,2r_0)}|f(x)|^{p'}w^{-\frac{1}{p-1}}(x)dx\right)^{1/p'}\\
    &\leq C|B(x_0,r_0)|^\alpha w(B(x_0,r_0))\|f\|_{{\rm BMO}_{\mathcal{L},w}^\alpha (\mathbb{R}^d)}.
\end{align*}
On the other hand, by using Proposition \ref{Prop2.1}, for every $N\in \mathbb{N}$ we can find $C=C(N)>0$ such that, for each $x\in B(x_0,r_0)$,
\begin{align*}
|W_*^{\mathcal L,k}(f_2)(x)|&\leq C\sup_{t>0}\Big(\frac{\sqrt{t}}{\rho (x)}\Big)^{-N}\frac{1}{t^{d/2}}\int_{\mathbb R^d\setminus B(x_0,2r_0)}e^{-c\frac{|x-y|^2}{t}}|f(y)|dy\\
&\leq C\rho (x_0)^N\int_{\mathbb R^d\setminus B(x_0,2r_0)}\frac{|f(y)|}{|x_0-y|^{N+d}}|f(y)|dy\\
&\leq C\rho (x_0)^N\sum_{j=1}^\infty \frac{1}{(2^j\rho (x_0))^{N+d}}\int_{B(x_0,2^{j+1}}r_0)|f(y)|dy\\
&\leq \frac{C}{r_0^d}\|f\|_{{\rm BMO}_{\mathcal L,w}^\alpha (\mathbb R^d)}\sum_{j=1}^\infty \frac{1}{2^{j(N+d)}}|B(x_0,2^{j+1}r_0)|^\alpha w(B(x_0,2^{j+1}r_0))\\
&\leq C\|f\|_{{\rm BMO}_{\mathcal L,w}^\alpha (\mathbb R^d)}|B(x_0,r_0)|^{\alpha -1}w(B(x_0,r_0))\sum_{j=1
}^\infty 2^{j(\alpha d+(\theta +d)p-d-N)}.
\end{align*}
In the last inequality we have used Proposition \ref{Prop2.3}, (b). By taking $N\in \mathbb{N}$, $N>d(p+\alpha-1)+p\theta $ we obtain
$$
|W_*^{\mathcal L,k}(f_2)(x)|\leq C|B(x_0,r_0)|^{\alpha -1}w(B(x_0,r_0))\|f\|_{{\rm BMO}_{\mathcal L,w}^\alpha (\mathbb R^d)},\quad x\in B(x_0,r_0).
$$
Then,
$$
\int_{B(x_0,r_0)}|W_*^{\mathcal L,k}(f_2)(x)|dx\leq C|B(x_0,r_0)|^\alpha w(B(x_0,r_0))\|f\|_{{\rm BMO}_{\mathcal L,w}^\alpha (\mathbb R^d)},
$$
and we conclude that
\begin{equation}\label{6.1}
\int_{B(x_0,r_0)}|W_*^{\mathcal L,k}(f)(x)|dx\leq C|B(x_0,r_0)|^\alpha w(B(x_0,r_0))\|f\|_{{\rm BMO}_{\mathcal L,w}^\alpha (\mathbb R^d)}.
\end{equation}
From \eqref{6.1} we deduce that $W_*^{\mathcal L,k}(f)(x)<\infty $, for almost all $x\in \mathbb R ^d$.

Let now $x_0\in \mathbb R^d$ and $0<r_0<\rho (x_0)$. We are going to see that 
$$
\int_{B(x_0,r_0)}(W_*^{\mathcal L,k}(f)(x)-{\rm ess}\hspace{-2mm}\inf_{\hspace{-5mm}y\in B(x_0,r_0)}W_*^{\mathcal L,k}(f)(y))dx\leq C|B(x_0,r_0)|^\alpha w(B(x_0,r_0))\|f\|_{{\rm BMO}_{\mathcal L,w}^\alpha (\mathbb R ^d)}.
$$
In order to do this we adapt the ideas developed in Section \ref{S3}. We have that
\begin{align*}
\int_{B(x_0,r_0)}(W_*^{\mathcal L,k}(f)(x)-{\rm ess}\hspace{-2mm}\inf_{\hspace{-5mm}y\in B(x_0,r_0)}W_*^{\mathcal L,k}(f)(y))dx&\leq \int_{B(x_0,r_0)}\;\;\sup_{0<t<8r_0^2}|t^k\partial _t^kW_t^{\mathcal L}
(f)(x)|dx\\
&\hspace{-5cm}\quad +|B(x_0,r_0)|{\rm ess}\hspace{-2mm}\sup_{\hspace{-5mm}z,y\in B(x_0,r_0)}\sup_{t\geq 8r_0^2}|t^k\partial _t^kW_t^\mathcal{L} (f)(z)-t^k\partial _t^kW_t^\mathcal{L}(f)(y)|\\
&\hspace{-5cm} =: M_1(f)+M_2(f).
\end{align*}
We decompose $f$ as follows 
$$
f=(f-f_{B(x_0,r_0)})\mathcal {X}_{B(x_0,2r_0)}+(f-f_{B(x_0,r_0)})\mathcal{X
}_{B(x_0,2r_0)^c}+f_{B(x_0,r_0)}=:f_1+f_2+f_3.
$$
Since $W_*^{\mathcal{L},k}$ is bounded from $L^{p'}(\mathbb R^d,w^{-1/(p-1)})$ into itself we get
$$
M_1(f_1)\leq C|B(x_0,r_0)|^\alpha w(B(x_0,r_0)\|f\|_{{\rm BMO}_{\mathcal L,w}^\alpha (\mathbb R ^d)}.
$$
According to Proposition \ref{Prop2.1}, (a), we obtain
\begin{align*}
M_1(f_2)&\leq C\int_{B(x_0,r_0)}\int_{\mathbb R^d\setminus B(x_0,2r_0)}|f(y)-f_{B(x_0,r_0)}|\sup_{0<t<8r_0^2}\frac{1}{t^{d/2}}e^{-c\frac{|x-y|^2}{t}}dydx\\
&\leq C\int_{\mathbb R^d\setminus B(x_0,2r_0)}|f(y)-f_{B(x_0,r_0)}|\frac{e^{-c\frac{|x_0-y|^2}{r_0^2}}}{|x_0-y|^d}dydx\\
&\leq C|B(x_0,r_0)|^\alpha w(B(x_0,r_0))\|f\|_{{\rm BMO}_{\mathcal L,w}^\alpha (\mathbb R ^d)}.
\end{align*}
Suppose now $k\in \mathbb{N}$, $k\geq 1$. Since $\partial _t^kW_t(1)=0$, it follows that
\begin{align*}
M_1(f_3)&\leq |f_{B(x_0,r_0)}|\int_{B(x_0,r_0)}\;\;\sup_{0<t<8r_0^2}\Big|\int_{\mathbb R^d}t^k\partial _t^k[W_t^{\mathcal L}(x,y)-W_t(x-y)]dy\Big|dx\\
&\leq |f_{B(x_0,r_0)}|\int_{B(x_0,r_0)}\sup_{0<t<8r_0^2}\int_{|x-y|<\rho (x_0)}|t^k\partial _t^k[W_t^{\mathcal L}(x,y)-W_t(x-y)]|dydx\\
&\quad +|f_{B(x_0,r_0)}|\int_{B(x_0,r_0)}\sup_{0<t<8r_0^2}\int_{|x-y|\geq\rho (x_0)}|t^k\partial _t^k[W_t^{\mathcal L}(x,y)-W_t(x-y)]|dydx\\
&=: M_{11}(f_3)+M_{12}(f_3).
\end{align*}
According to Proposition \ref{Prop2.1}, (d), since $2-\frac{d}{q}>d(p+\alpha-1)+p\theta$, we obtain
\begin{align*}
M_{11}(f_3)&\leq C|f_{B(x_0,r_0)}|\int_{B(x_0,r_0)}\;\;\sup_{0<t<8r_0^2}\int_{|x-y|<\rho (x_0)}\Big(\frac{\sqrt{t}}{\rho (x_0)}\Big)^{2-\frac{d}{q}}\frac{e^{-c\frac{|x-y|^2}{t}}}{t^{d/2}}dydx\\
&\leq C|f_{B(x_0,r_0)}|\int_{B(x_0,r_0)}\int_{|x-y|<\rho (x_0)}\rho (x_0)^{\frac{d}{q}-2}\frac{e^{-c\frac{|x-y|^2}{r_0^2}}}{|x-y|^{d+\frac{d}{q}-2}}dydx\\
&\leq C|B(x_0,r_0)|^\alpha w(B(x_0,r_0))\|f\|_{{\rm BMO}_{\mathcal L,w}^\alpha (\mathbb R ^d)}.
\end{align*}
By using again Proposition \ref{Prop2.1}, (a), and (\ref{dergaus}), for every $\beta >0$, we get
$$
\int_{B(x_0,r_0)}\;\;\sup_{0<t<8r_0^2}\int_{|x-y|\geq \rho (x_0)}|t^k\partial _t^k[W_t^{\mathcal L}(x,y)-W_t(x-y)]|dydx\leq C|B(x_0,r_0)|\Big(\frac{r_0}{\rho (x_0)}\Big)^\beta.
$$
By taking $\beta =d(\alpha +p-1)+p\theta$ it follows that
$$
M_{12}(f_3)\leq C|B(x_0,r_0)|^\alpha w(B(x_0,r_0))\|f\|_{{\rm BMO}_{\mathcal L,w}^\alpha (\mathbb R ^d)}.
$$
We conclude that
$$
M_1(f_3)\leq C|B(x_0,r_0)|^\alpha w(B(x_0,r_0))\|f\|_{{\rm BMO}_{\mathcal L,w}^\alpha (\mathbb R ^d)}.
$$
We can write 
\begin{align*}
M_2(f)&\leq |B(x_0,r_0)|{\rm ess}\hspace{-2mm}\sup_{\hspace{-5mm}x,y\in B(x_0,r_0)}\sup_{t>8\rho (x_0)^2}\Big|\int_{\mathbb R^d}[t^k\partial _t^kW_t^\mathcal{L} (x,z)-t^k\partial _t^kW_t^\mathcal{L}(y,z)]f(z)dz\Big|\\
&\quad +|B(x_0,r_0)|{\rm ess}\hspace{-2mm}\sup_{\hspace{-5mm}x,y\in B(x_0,r_0)}\sup_{8r_0^2\leq t<8\rho (x_0)^2}\Big|\int_{\mathbb R^d}[t^k\partial _t^kW_t^\mathcal{L} (x,z)-t^k\partial _t^kW_t^\mathcal{L}(y,z)]f(z)dz\Big|\\
&=: M_{21}(f)+M_{22}(f).
\end{align*}
By using Proposition \ref{Prop2.1}, (b), for every $0<\delta <\delta_0$ there exists $C>0$ such that
\begin{align*}
\Big|\int_{\mathbb R^d}[t^k\partial _t^kW_t^\mathcal{L} (x,z)-t^k\partial _t^k & W_t^\mathcal{L}(y,z)]f(z)dz\Big|\\ &\leq C\Big(\frac{|x-y|}{\sqrt{t}}\Big)^\delta\|f\|_{{\rm BMO}_{\mathcal L,w}^\alpha(\mathbb R^d)} \frac{w(B(x_0,r_0))}{r_0^{p(\theta+d)}}t^{\frac{d}{2}(p+\alpha -1)+\frac{p\theta}{2}},
\end{align*}
for each $t>8\rho (x_0)^2$ and $x,y\in B(x_0,r_0)$.
Then,
$$
M_{21}(f)\leq C|B(x_0,r_0)|^\alpha w(B(x_0,r_0))\|f\|_{{\rm BMO}_{\mathcal L,w}^\alpha (\mathbb R ^d)},
$$
provided that $\delta >d(p+\alpha -1)+p\theta$.

On the other hand, we have that
\begin{align*}
\Big|\int_{\mathbb R^d}[t^k\partial _t^kW_t^\mathcal{L} (x,z)-t^k\partial _t^kW_t^\mathcal{L}(y,z)]f(z)dz\Big|&\\
&\hspace{-4cm}\leq \Big|\int_{\mathbb R^d}[t^k\partial _t^kW_t^\mathcal{L} (x,z)-t^k\partial _t^kW_t^\mathcal{L}(y,z)](f(z)-f_{B(x_0,r_0)})dz\Big|\\
&\hspace{-4cm}\quad +\Big|\int_{\mathbb R^d}[t^k\partial _t^kW_t^\mathcal{L} (x,z)-t^k\partial _t^kW_t^\mathcal{L}(y,z)]dz\Big||f_{B(x_0,r_0)}|\\
&\hspace{-4cm}=:H_1(x,y,t)+H_2(x,y,t),\quad x,y\in B(x_0,r_0)\mbox{ and }t\in (8r_0^2,8\rho (x_0)^2).
\end{align*}
We get 
$$
\sup_{r_0^2<t\leq 8\rho (x_0)^2}(H_1(x,y,t)+H_2(x,y,t))\leq Cw(B(x_0,r_0))r_0^{d(\alpha -1)}\|f\|_{{\rm BMO}_{\mathcal L,w}^\alpha (\mathbb R^d)
}.
$$
We conclude that
$$
M_{22}(f)\leq C|B(x_0,r_0)|^\alpha w(B(x_0,r_0))\|f\|_{{\rm BMO}_{\mathcal L,w}^\alpha (\mathbb R ^d)}.
$$
Thus,
$$
M_2(f)\leq C|B(x_0,r_0)|^\alpha w(B(x_0,r_0))\|f\|_{{\rm BMO}_{\mathcal L,w}^\alpha (\mathbb R ^d)},
$$
and the proof is finished when $k\in \mathbb{N}$, $k\geq 1$.

In order to establish the result for $k=0$, that is, to see that the maximal operator $W_*^{\mathcal L}$ is bounded from ${\rm BMO}_{\mathcal L,w}^\alpha (\mathbb R ^d)$ into ${\rm BLO}_{\mathcal L,w}^\alpha (\mathbb R ^d)$ we can proceed as in the proof of \cite[Theorem 3.1]{YYZ2}. We remark that the arguments in the proof of \cite[Theorem 3.1]{YYZ2} can be adapted to establish that the maximal operator $W_*^{\mathcal L,k}$, $k\in \mathbb{N}$, $k\geq 1$, is bounded from ${\rm BMO}_{\mathcal L,w}^\alpha (\mathbb R ^d)$ into ${\rm BLO}_{\mathcal L,w}^\alpha (\mathbb R ^d)$ but we have preferred to show that the procedure in Section \ref{S3} also works for $W_*^{\mathcal L,k}$, $k\in \mathbb N$, $k\geq 1$.
\bibliographystyle{acm}

\end{document}